\documentclass[runningheads]{llncs_edit}
\usepackage[english]{babel}
\usepackage[dvipsnames]{xcolor}
\usepackage{graphicx,pifont}
\usepackage[a4paper,top=3cm,bottom=3cm,
           left=3cm,right=3cm,marginparwidth=60pt,includehead]{geometry}

\usepackage[T1]{fontenc}
\usepackage[utf8]{inputenc}

\usepackage{amsthm,amsmath,amssymb}
\usepackage{orcidlink}
\usepackage{todonotes}
\usepackage{bm}
\usepackage{bbm}
\usepackage{comment}
\usepackage{soul}

\usepackage[numbers, sort]{natbib}

\usepackage[advantage, adversary, landau, lambda, probability, notions, oracles, asymptotics, noamsfonts]{cryptocode}

\usepackage[capitalise,noabbrev]{cleveref}

\newcommand{\Gr}{{\mathrm{Gr}}}
\newcommand{\Grkn}{{\mathrm{Gr}_q(k,n)}}
\newcommand{\dia}{{\mathcal{D}}}
\newcommand{\diagn}{{\mathcal{D}_{n,q}}}

\newcommand{\Monn}{{\mathcal{M}_{n,q}}}
\newcommand{\K}{\mathbb{K}}

\newtheorem*{maintheorem}{Main Result}

\begin{document}
\title{Linear Code Equivalence via Pl\"ucker Coordinates}
%
%
\author{
    Gessica Alecci \orcidlink{0000-0002-9514-5987} \and 
    Giuseppe D'Alconzo \orcidlink{0000-0001-7377-6617}
}
\institute{
    Politecnico di Torino, Torino, Italy \\
     \email{gessica.alecci@polito.it, giuseppe.dalconzo@polito.it}
}
\authorrunning{G. Alecci and G. D'Alconzo}
\maketitle              

\begin{abstract}
   The assumed hardness of the Linear Code Equivalence problem (LCE) lies at the core of the security of the LESS signature scheme and other signature schemes with advanced functionalities. The LCE problem asks to determine whether two linear codes are equivalent.
   This equivalence is represented by a monomial matrix $\bm Q$, i.e. the product of a diagonal matrix $\bm D$ and a permutation matrix $\bm P$. The recovery of $\bm Q=\bm D \bm P$ is known to be reduced to the recovery of the permutation matrix $\bm P$ alone.
   Exploiting this fact, we construct an algebraic model for LCE involving only the matrix $\bm P$. To this end, we study the action of monomial matrices on linear codes using tools from algebraic geometry, including Pl\"ucker coordinates and fields of invariant rational functions. In particular, we analyse the action of diagonal matrices on linear codes, which can be interpreted as diagonal scaling of the coordinates of elements of the Grassmannian.
   We propose a method to determine algebraically independent generators of the field of rational functions invariant under this action, without relying on Reynolds operators or Gr\"obner basis computations. Furthermore, given two equivalent codes, we apply our results to explicitly construct, for each invariant function, a polynomial having $\bm P$ as a root. However, the resulting polynomials are not of practical use: their degrees are high for cryptographically relevant parameters, and the number of monomials grows exponentially, making them infeasible to manipulate. Despite this limitation, our results are of theoretical interest, as they constitute the first application of these tools to the cryptanalysis of LCE and provide insight into how algebraic geometry and invariant theory can be employed in Cryptography.
    
  \keywords{Linear Code Equivalence, Grassmannians, Pl\"ucker Coordinates}
\end{abstract}

\section{Introduction}\label{sec-introduction}

\subsubsection{Linear Code Equivalence Problem.}
In the realm of Post-Quantum cryptographic assumptions, the Linear Code Equivalence problem (LCE) has received considerable attention in recent years. The problem consists in determining, if it exists, a linear map which preserves the Hamming distance (i.e. a linear isometry) between two linear codes, that is, between two $k$-dimensional vector subspaces of $\mathbb{F}_q^n$. The group of isometries, denoted by $\Monn$, is the set of \emph{monomial matrices}, so the $n\times n$ matrices with exactly one non-zero entry in each row and column. It is known that $\Monn$ is the semidirect product between $\diagn$, the group of invertible diagonal $n\times n$ matrices over $\mathbb{F}_q$, and $\mathcal{S}_n$, the permutation group on $n$ elements. The LCE problem can be naturally formulated as an instance of the following group action problem: given an action $\star$ of a group $G$ over a set $X$, and given two set elements $x,y$, find, if any, a group element $g$ such that $y=g\star x$. In the literature, this is known as \emph{Group Action Inversion Problem} \cite{stolbunov2012} or \emph{Vectorization Problem} \cite{EPRINT:Couveignes06}. A group action for which this problem is intractable is said \emph{one-way} \cite{AC:ADMP20}. In the case of LCE, $X$ is given by the set of linear codes of dimension $k$ and length $n$ (which is the Grassmannian of $k$-dimensional subspaces of $\mathbb{F}_q^n$, denoted by $\Grkn$), while the group $G$ is formed by the isometries $\Monn$.

The first appearance of the Linear Code Equivalence problem in Cryptography is its use in the LESS signature scheme \cite{AFRICACRYPT:BMPS20} and follow-up works \cite{barenghi2022advanced,PQCRYPTO:BBPS21,DCC:ChoPerSan25,AC:PerSan23}. The problem became relevant with the submission of LESS to the NIST's call for post-quantum digital signatures \cite{NIST2023} and its selection for the round-two phase of the call. Since then, the cryptanalysis of LCE intensified, ranging from codeword-search approaches \cite{barenghi2023computational,SAC:Beullens20,EPRINT:BEFN25}, canonical form-based ones \cite{11269862,DCC:ChoPerSan25,PQCRYPTO:Nowakowski25,AC:PerSan23} and ad-hoc strategies for certain classes of codes \cite{EPRINT:BatMorSan25}. Another promising approach is the one represented by algebraic attacks. An algebraic model for a computational problem is a system of polynomials having as a root a solution of the problem. Algebraic attacks to LCE have been studied in \cite{saeed2017algebraic}.

\subsubsection{Quotient Group Actions.}
A significant turning point is represented by the seminal work of Chou, Persichetti, and Santini \cite{DCC:ChoPerSan25}, in which the authors reduce the LCE problem to an equivalent one, where the search for the isometry is restricted to a subset of $\Monn$. This approach is generalized in \cite{d2026group}, showing that if $G=G_1 \rtimes G_2$, then the existence of an efficient way of computing representatives (i.e. a \emph{canonical form}) for the equivalence classes of $X/G_1$ implies that one can use the action of the subgroup $G_2$ on $X/G_1$ and the inversion problem is equivalent to the original one. In \cite{d2026group}, the authors propose an application to the Linear Code Equivalence, setting $\diagn$ as $G_1$ and $\mathcal{S}_n$ as $G_2$, and they introduce a canonical form for the action of $\diagn$ on linear codes. The work \cite{cryptoeprint:2025/397} follows the same approach and provides another canonical form for the diagonal action. One of the strengths of the two works is to reduce the group action inversion problem, where the group is $\Monn$ acting on $\Grkn,$ to the one in which $\mathcal{S}_n$ acts on the orbits $\Grkn/\diagn$.

Current LCE cryptanalysis does not exploit this formulation, but it may be of particular interest for the following reason. Suppose that the group element $g$ sends $x$ to $y$, then its inverse maps $y$ to $x$. Therefore, since we can consider two instances of the problem, if the inverse of $g$ can be written in terms of $g$ without adding new variables, using an algebraic model, we obtain twice as many equations in the same number of unknowns. From the LCE formulation of \cite{d2026group,cryptoeprint:2025/397}, we obtain that the group element is a permutation, and its inverse, when seen as a matrix $\bm P$, is the transpose $\bm P^{-1}=\bm P^{t}$, in particular the entries of $\bm P^{-1}$ are linear in the entries of $\bm P$.

\subsubsection{Multi-samples LCE.}
Advanced cryptographic functionalities could require additional assumptions besides the one-wayness of the action. These other assumptions can be summarized as the $t$-samples Group Action Inversion problem: giving $(x_1,\dots,x_t)$ and $(y_1,\dots,y_t)$ such that $y_i = g\star x_i$, find $g$. For instance, the linkable ring signatures from \cite{barenghi2022advanced,AFRICACRYPT:CNPRRS23} are based on the 2-sample problems on LCE and Matrix Code Equivalence, respectively. As shown in \cite{FFA:DalDis24}, weak unpredictability and weak pseudo-randomness from \cite{AC:ADMP20} are reducible to the \emph{multiple one-wayness}, the assumption saying that, given a polynomial number of samples, it is intractable to find the group element $g$. In \cite{FFA:DalDis24}, it is shown that the textbook formulation of LCE\footnote{The authors consider the action of $\mathrm{GL}_{k,q}\times \Monn$ on $\mathbb{F}_q^{k \times n}$ given by $((\bm S,\bm Q),\bm G) \mapsto \bm S \bm G \bm Q$.} is not multiple one-way. Later, the same result for the actual LCE used in applications, where $\Monn$ acts on $\Grkn$, has been proven in \cite{AC:BCDSK24}. Moreover, in \cite{AC:BCDSK24}, the authors prove that the $2$-sample LCE is heuristically solvable in polynomial time. In the single sample scenario, the different formulations of LCE are equivalent, but this is not true when we deal with multiple samples. Hence, it is not known if the action of $\mathcal{S}_n$ on the orbits $\Grkn/\diagn$ from \cite{d2026group,cryptoeprint:2025/397} is weak pseudo-random, weak unpredictable or multiple one-way.

\subsubsection{Our contribution.}
Writing an algebraic model for the Group Action Inversion Problem of the action of $\mathcal{S}_n$ on the orbits $\Grkn/\diagn$ helps the cryptanalysis of both one-wayness and multiple one-wayness of LCE. The presence of permutations as group elements leads to the possibility of doubling the number of equations using the fact that the entries in $\bm P^{-1}$ are linear in the entries of $\bm P$. For this reason, we tried to answer the following question
\begin{center}
    \textit{Can we find an algebraic model for the Linear Code Equivalence problem formulation with the action of $\mathcal{S}_n$ on the orbits $\Grkn/\diagn$?}
\end{center}
Let $\star$ be the action of $\Monn$ on $\Grkn$. The subgroups $\diagn$ and $\mathcal{S}_n$ of $\Monn$ inherit the same action. Hence, the goal of this work is to model the group action inversion problem of the following action
\begin{equation}\label{star}
    \begin{array}{cccc}
    \star_{\mathcal{S}_n} : \; &\mathcal{S}_n \times \Grkn/\diagn &\to &  \Grkn/\diagn\\
    &(\bm P, [V]_{\diagn}) &\mapsto & [\bm P \star V]_{\diagn},
\end{array}
\end{equation}
where $\bm P$ is a permutation matrix and $[V]_{\diagn}$ is the equivalence class of the vector space $V$ under the action of $\diagn$. Observe that this action is defined as the quotient of the usual one related to the code equivalence problem.

The following theorem partially answers the above question.
\begin{maintheorem}[\Cref{th:main}]\label{th_main}
    Let $\bm G_1$ and $\bm G_2$ be a random instance of the LCE problem, i.e. generator matrices of two equivalent codes over $\mathbb{F}_q$ of dimension $k$ and length $n$. Suppose that they are equivalent via the monomial matrix $\bm Q=\bm P \bm D$. 
    Then, with overwhelming probability over the choice of $\bm G_1$ and $\bm G_2$, there exists a polynomial $h\in \mathbb{F}_q[x_{1,1},\dots,x_{n,n}]$ of degree $2k$ with $2(k!)^2$ monomials which vanishes on the entries of the permutation matrix $\bm P$.
\end{maintheorem}
Thanks to this result, we provide the first algebraic modeling of the LCE problem where the unknowns are the entries of the permutation matrix $\bm P$ leading to the equivalence. Since the degree of the polynomial equations is $2k$, this approach is not practical for parameters sets of cryptographic interest (see, for example, the ones used in \cite{NISTPQC-ADD-R1:LESS23} where $k\ge 126$). Moreover, the polynomials we obtain have a super-polynomial number of monomials, making their use and manipulation not feasible in practice. However, to obtain this algebraic modeling, the action of $\diagn$ on $\Grkn$ has been studied, and an algorithm to explicitly find the generators of the field of invariant functions $\mathbb{F}_q(\Grkn)^{\diagn}$ is presented.

Recall that the computational problem we want to solve using an algebraic model is the following: given representatives of $[\bm P\star V]_{\diagn}$ and $[V]_{\diagn}$, find $\bm P$. To prove the above theorem and write the polynomials of the algebraic model, we take into account the following points.
\begin{enumerate}
    \item The group $\diagn$ acts on $\Grkn$ and we are interested in invariant rational functions, i.e. elements of the field of invariants $\mathbb{F}_q(\Grkn)^{\diagn}$. In fact, given $\mu\in \mathbb{F}_q(\Grkn)^{\diagn}$, we have that $\mu(\lambda \star V) = \mu(V)$ for each $\lambda\in \diagn$ and $V \in \Grkn$. This means that given two vector spaces $U,V$ such that $[U]_{\diagn}=[V]_{\diagn}$, then they satisfy \begin{equation}\label{eq:UV}
        \mu(U)=\mu(V).
    \end{equation}
    \item Consider the action of the permutation group over the quotient $\Grkn/\diagn$. If $\bm Q$ is a monomial matrix given by $\bm Q = \bm D \bm P$, with $\bm P$ permutation matrix and $\bm D$ diagonal, we have that the spaces $\bm Q\star V$ and $\bm P\star V$ are in the same equivalence class with respect to $\diagn$. Hence, \Cref{eq:UV} leads to
    \begin{equation}\label{eq:PV}
        \mu(\bm Q \star V)=\mu(\bm P\star V).
    \end{equation}
    \item Since $\bm Q \star V$ and $V$ are known, treat the entries of the permutation matrix $\bm P$ in \Cref{eq:PV} as unknowns. In this way, for each $\mu$, we find a polynomial in $n^2$ unknowns, and the number of different $\mu$'s we can employ is the number of algebraically independent invariants in $\mathbb{F}_q(\Grkn)^{\diagn}$, i.e. the number of generators of $\mathbb{F}_q(\Grkn)^{\diagn}$ over $\mathbb{F}_q$.
\end{enumerate}
In order to work with rational functions defined on $\Grkn$, we need to parametrize this set. For this reason we use the Pl\"ucker coordinates and we work on the field of coordinates $\mathbb{F}_q(\Grkn)$. Previously known methods for computing the generators rely on the computation of Gr\"obner bases and the Reynolds invariants. In our case, we show that it can be avoided: we present an algorithm which exploits how the action of $\diagn$ modifies the Pl\"ucker coordinates of a space, finding the invariant functions and selecting algebraically independent ones using a variant of the Jacobian criterion.

This article is organized as follows: in \Cref{Sect:prel} we present Algebraic Geometry and Cryptography preliminaries needed to develop and present our results. We then study the action of the group $\diagn$ on $\Grkn$ and provide an algorithm to compute generators of $\mathbb{F}_q(\Grkn)^{\diagn}$ in \Cref{sec:pluck_diag}. An example is given in Subsection \ref{sect:example24}.
Finally, we apply the developed tools to the Linear Code Equivalence problem in \Cref{Sect:LCE}, showing how to obtain an algebraic model in the entries of the permutation matrix describing the equivalence between two codes. An additional example of this technique is provided in Subsection \ref{subs:example_poly}.

\section{Preliminaries}\label{Sect:prel}
\subsection{Notation}
Let $q$ be a prime power and $n$ and $k$ be positive integers such that $k\le n$. The finite field with $q$ elements is denoted by $\mathbb{F}_q$ and the $n$-dimensional vector space over it by  $\mathbb{F}_q^n$. Given a matrix $\bm A \in R^{k \times n}$, the left kernel of $\bm A$ is the set $\{ v \in R^{1\times k} \mid v\bm A = 0\}$.
The set $\{1,\dots,n\}$ is denoted by $[n]$ and the set of its subsets of cardinality $k$ is $\binom{[n]}{k}$. For the rest of the paper we assume the standard lexicographic order on subsets $I\in\binom{[n]}{k}$, hence, there is an implicit bijection between the integers $1,\dots,\binom{n}{k}$ and the elements of $\binom{[n]}{k}$. The union of two sets $A$ and $B$ as multiset is denoted by $A\uplus B$, for example $\{1,2\} \uplus \{2,3\} = \{1,2,2,3\}$.

\subsection{Algebraic Geometry}\label{subs:alggeo}

\subsubsection{Grassmannian and Pl\"ucker Coordinates.}
Let $V$ be an $n$-dimensional vector space over a field $\K$.
The \emph{Grassmannian} $\mathrm{Gr}(k,V)$, also denoted $\mathrm{Gr}(k,n)$ when $V=\K^n$, is the set of all $k$-dimensional vector subspaces of $V$. Each point of $\mathrm{Gr}(k,V)$ corresponds to a $k$-dimensional subspace $W \subset V$.
The Grassmannian $\mathrm{Gr}(k,n)$ is a smooth projective variety of dimension $k(n-k)$.
When $\K = \mathbb{F}_q$ is a finite field with $q$ elements, the Grassmannian is a finite algebraic variety whose $\mathbb{F}_q$-rational points correspond to all $k$-dimensional subspaces of the vector space $\mathbb{F}_q^n$. In this last case, we denote the Grassmannian through $\Gr_q(k,n)$.

If $\{v_1, \dots, v_k\}$ is a basis of $W$, we can represent $W$ by the $k \times n$ matrix $\bm A$ whose rows are the coordinates of the $v_i$.  
Changing the basis of $W$ multiplies this matrix on the left by an invertible $k \times k$ matrix, so the subspace $W$ is determined up to the natural action of $\mathrm{GL}_k$.
For each multi-index \(I = \{i_1 < \cdots < i_k\} \subset \{1,\dots,n\}\) of size \(k\), let
\[
   p_I(W) = \det(\bm A_{I}),
\]
where \(\bm A_{I}\) denotes the \(k \times k\) submatrix of \(\bm A\) formed by the columns indexed by \(I\).  
The collection of the minors
\[
    \left(p_{I_1}(W):\dots: p_{I_{\binom{n}{k}}} (W) \right) \in \mathbb{P}^{\binom{n}{k}-1},
\]
are called the \emph{Plücker coordinates} of the subspace \(W\).  
Observe that each $p_I(W)$ is a homogeneous polynomial of degree $k$ in the entries of $\bm A$.

The map that associates to each subspace $W \subset \K^n$ its Plücker coordinates is called the \emph{Plücker embedding}, defined as
\begin{equation}\label{PlucEmb}
    \begin{array}{cccc}
   \iota : \; &\Gr(k,n) & \hookrightarrow & \mathbb{P}^{\binom{n}{k}-1} \\
    &W & \longmapsto & \left(p_{I_1}(W):\dots:  p_{I_{\binom{n}{k}}} (W) \right).
\end{array}
\end{equation} 

The image of this embedding is not the whole projective space, but rather the subset of points whose Plücker coordinates $p_I$ satisfy certain quadratic relations called  \emph{Plücker relations} \cite{gatto2005schubert,kleiman1972schubert}; given $I\in \binom{[n]}{k-1}$ and $J=\{j_1,\dots,j_{k+1}\}\in \binom{[n]}{k+1}$, one relation is given by
\begin{equation}\label{eq:qr}
    \sum_{\ell =1}^{k+1} (-1)^\ell p_{I\cup \{j_\ell\}}p_{J\setminus \{j_\ell\}} = 0,
\end{equation}
with the common practice that $p_{Y \cup \{y\}}=0$ if $y\in Y$.

\subsubsection{Rational functions.}
Let $\K$ be a field. The \emph{field of rational functions} in $n$ variables over $\K$,
denoted by $\K(x_1,\dots,x_n)$, is the set 
\[
\K(x_1,\dots,x_n)= \mathrm{Frac}(\K[x_1,\dots,x_n]) = \left\{ \frac{f}{g} \;\Bigg|\; f, g \in \K[x_1,\dots,x_n], g \neq 0 \right\}.
\] 
Given an irreducible affine algebraic variety $X \subset \K^n$, its ideal in $\K[x_1,\dots,x_n]$ is $I(X)=\{ F \in \K[x_1,\dots,x_n] \mid  F(P) = 0 \,\text{ for all } P \in X\}$. The ring of coordinates of $X$ is defined as $\K[X]=\K[x_1,\dots,x_n]/I(X)$.
Its fraction field is the field of coordinates of $X$, denoted with $\K(X)$.

Let $G$ be a group acting on the variety $X$ via the map $\star:G\times X\to X$. The action can be extended to polynomials $\K[X]$
by letting $g \in G$ send
$f \in \K[X]$ to the polynomial $g \circ f = f(g^{-1} \star x)$. Then the \emph{ring of $G$-invariant functions on $X$} is given by
\[
    \K[X]^G = \{ f \in \K[X] \mid g \circ f = f \quad \forall g \in G \},
\]
i.e. for each point $x \in X$ and $g \in G$, we have that $f(g^{-1} \star x)= f(x)$. The field of $G$-invariant functions is defined as the fraction field of $\K[X]^G$. The \emph{Reynolds operator} \cite[Sect.~2.1]{sturmfels2008algorithms} of a finite group $G$ acting on a variety $X$ sends a rational function $f$ to an invariant rational function $R(f)$, acting as follows
\begin{equation}
    \begin{array}{cccc}
       R :  &  \K(X) &\to & \K(X)^G\\
         & f & \mapsto & \frac{1}{\lvert G \rvert} \sum_{g \in G} g \circ f .
    \end{array}
\end{equation}
Moreover, since the computation of the Reynolds operator \cite[Sect.~2.1]{sturmfels2008algorithms} has a cost which is linear in the cardinality of the group, it is not doable for groups like $\dia_n$

\subsubsection{Algebraic independence.}
Let $\mathbb{E}$ be an extension of a field $\mathbb{K}$. A set of elements $f_1,\dots,f_\ell\in \mathbb{E}$ is said to be \emph{algebraically independent} over $\mathbb{K}$ if, for any polynomial $P\in \mathbb{K}[y_1,\dots,y_\ell]$, we have $P(f_1,\dots,f_\ell)\ne 0$ in $\mathbb{E}$. The concept of algebraic independence is linked to the linear independence of the differentials. Concerning rational functions, given $f \in \K(x_1,\dots,x_n)$, its partial derivative $\partial f/\partial x_i $ is the formal derivative with respect to $x_i$, while the \emph{differential of $f$} is given by the vector 
\[
    \mathrm{d}f = (\partial f/\partial x_1,\dots,\partial f/\partial x_n).
\]
The \emph{module of K\"ahler differentials} $\Omega_{\mathbb{E}/\K}$ is the set $\{\mathrm{d}f \mid f \in \mathbb{E} \}$ and it is a vector space over the field $\mathbb{E}$. Then, as reported in sections 16.5 and 16.6 of \cite{eisenbud2013commutative}, the set of rational functions $f_1,\dots,f_\ell\in \K(X)$ is algebraically independent over $\K$ if the set of differentials $\mathrm{d}f_1,\dots,\mathrm{d}f_\ell\in \Omega_{\K(X)/\K}$ is linearly independent (over $\K(X)$). As showed in \cite{eisenbud2013commutative}, if $X$ is defined by polynomials $F_1,\dots,F_r \in \K(x_1,\dots,x_n)$, we have that 
\[
    \Omega_{\K(X)/\K} \cong \Omega_{\K(x_1,\dots,x_n)/\K}/\langle \mathrm{d}F_1,\dots, \mathrm{d}F_r \rangle.
\]
We can summarize the algebraic independence criterion when $X=\Gr(k,n)$ and $F_1,\dots, F_r$ are the polynomials defining the Pl\"ucker relations.
\begin{proposition}\label{Lem:Jac}
    Let $F_1,\dots, F_r$ be the polynomials defining the Pl\"ucker relations and let $\left\{\mathrm{d}F_{i_1},\dots, \mathrm{d}F_{i_t}\right\}$ be a maximal subset of linearly independent elements in $\{\mathrm{d}F_1,\dots, \mathrm{d}F_r \}$. The polynomials $f_1\dots,f_\ell$ are algebraically independent over $\K(X)$ if the $(\ell + t) \times n$ matrix whose rows are $\mathrm{d}f_1,\dots, \mathrm{d}f_\ell$, $\mathrm{d}F_{i_1}, \dots, \mathrm{d}F_{i_t}$ has rank $\ell + t$ over $\K(X)$.
\end{proposition}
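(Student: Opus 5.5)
The plan is to reduce the statement to the standard fact cited from \cite{eisenbud2013commutative}: differentials of algebraically dependent elements are linearly dependent. Concretely, we show that the rank hypothesis forces $\mathrm{d}f_1,\dots,\mathrm{d}f_\ell$ to be linearly independent in $\Omega_{\K(X)/\K}$, and that this in turn forbids any nontrivial polynomial relation among the $f_i$. Throughout, the ambient variables are the Pl\"ucker coordinates $x_I$, with $I\in\binom{[n]}{k}$, so the matrix has as many columns as there are Pl\"ucker coordinates. The $n$ in the statement is read in this sense. We work on the affine cone over $\Gr(k,n)$, or equivalently on an affine chart $x_{I_0}\neq 0$, so that $\K(X)$ is the function field of an irreducible affine variety cut out by the Pl\"ucker relations $F_1,\dots,F_r$. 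The conclusion we aim for is algebraic independence over $\K$, which is the meaning needed later.

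\textbf{Step 1 (presentation of the differentials).} Since $\K(x_I)_I$ is purely transcendental, $\Omega_{\K(x)/\K}$ is free with basis the $\mathrm{d}x_I$. The Pl\"ucker ideal is prime, so the conormal sequence localized at the generic point gives the isomorphism $\Omega_{\K(X)/\K}\cong \Omega_{\K(x)/\K}/\langle \mathrm{d}F_1,\dots,\mathrm{d}F_r\rangle$ recalled above, after tensoring with $\K(X)$. Hence $\mathrm{d}f$ is represented by the row vector of partial derivatives evaluated in $\K(X)$. By maximality of $\{\mathrm{d}F_{i_1},\dots,\mathrm{d}F_{i_t}\}$, the submodule $\langle \mathrm{d}F_1,\dots,\mathrm{d}F_r\rangle$ is already spanned by $\mathrm{d}F_{i_1},\dots,\mathrm{d}F_{i_t}$.

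\textbf{Step 2 (linear independence in $\Omega$).} Suppose $\sum_i a_i\,\mathrm{d}f_i=0$ in $\Omega_{\K(X)/\K}$ with $a_i\in\K(X)$. By Step 1, $\sum_i a_i\,\mathrm{d}f_i=\sum_j b_j\,\mathrm{d}F_{i_j}$ for some $b_j\in\K(X)$, which is a linear relation among the rows of the $(\ell+t)\times\binom{n}{k}$ matrix. Full rank $\ell+t$ forces all $a_i=0$, so $\mathrm{d}f_1,\dots,\mathrm{d}f_\ell$ are linearly independent.

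\textbf{Step 3 (from differentials to algebraic independence).} Suppose $P\in\K[y_1,\dots,y_\ell]$ is nonzero with $P(f_1,\dots,f_\ell)=0$, and choose $P$ of minimal total degree. Applying $\mathrm{d}$ gives $\sum_i \frac{\partial P}{\partial y_i}(f)\,\mathrm{d}f_i=0$. By Step 2, every $\frac{\partial P}{\partial y_i}(f)$ vanishes. Each $\frac{\partial P}{\partial y_i}$ has smaller degree than $P$, so by minimality each is the zero polynomial. In characteristic $0$ this makes $P$ a nonzero constant, which is absurd. In characteristic $p$ it means $P$ is a polynomial in $y_1^p,\dots,y_\ell^p$. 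Since $\K=\mathbb{F}_q$ is perfect, we can write $P=Q^p$ with $Q\in\K[y]$, so $Q(f)^p=0$ in the field $\K(X)$, hence $Q(f)=0$. But $Q$ is nonzero of smaller degree than $P$, contradicting minimality.

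The step I expect to be the main technical obstacle is Step 1. It requires the identification of $\Omega_{\K(X)/\K}$ with the quotient, and this relies on the Pl\"ucker relations generating a \emph{prime} (in particular radical) ideal, so that the $\mathrm{d}F_j$ span exactly the conormal image. It also requires passing carefully between the projective Grassmannian and an affine model. For this I would invoke the classical fact that the Pl\"ucker ideal is prime together with \cite[Sect.~16.5--16.6]{eisenbud2013commutative}, rather than reprove it.
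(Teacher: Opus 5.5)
Your proposal is correct and follows the same route as the paper, which justifies the proposition only by combining Eisenbud's differential criterion (linearly independent differentials imply algebraic independence) with the presentation $\Omega_{\K(X)/\K}\cong\Omega_{\K(x)/\K}/\langle \mathrm{d}F_1,\dots,\mathrm{d}F_r\rangle$; this is exactly your Steps 1--2. You go further by proving the criterion itself in Step 3, where the characteristic-$p$ case correctly relies on $\mathbb{F}_q$ being perfect, and by fixing the statement's slips: independence is over $\K$ rather than $\K(X)$, the matrix has $\binom{n}{k}$ columns rather than $n$, and the quotient is taken after tensoring with $\K(X)$.
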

If the elements $f_1,\dots,f_\ell$ are not algebraically independent, one can use \Cref{Lem:Jac} to choose an algebraic independent subset of them selecting the elements $f_{i_1},\dots,f_{i_s}$ such that the $(s + t) \times n$ matrix whose rows are given by vectors $\mathrm{d}f_{i_1},\dots, \mathrm{d}f_{i_s}, \mathrm{d}F_{i_1}, \dots, \mathrm{d}F_{i_t}$ has rank $s + t$.

\subsection{Cryptography}

\subsubsection{Cryptographic Group Actions.}
Let $(G,\cdot)$ be a group and $X$ be a set, we denote by $\star : G\times X \to X$ an action of $G$ over $X$.
In the cryptographic context, a group action is said \emph{effective} if there are polynomial-time algorithms for computing the group action $\star$, the group operation, the inverses, the canonical bitstring representation and sapling in $G$ and $X$. See \cite{AC:ADMP20} for a more formal definition. We define the following problem.
\begin{definition}\label{def:GAIP}
    The \emph{Group Action Inversion problem} of the action $(G,X,\star)$ asks, given $x,y \in X$, to find, if any, $g \in G$ such that $g \star x=y$.
\end{definition}
In Cryptography, in order to have a suitable group action we want that, given \emph{random} $x\in X$ and $g \in G$, computing $g$ from the pair $(x, g\star x)$ is assumed to be infeasible, i.e. exponential in $\log(\lvert G \rvert)$. An action having this property is said \emph{one-way}. Some cryptographic constructions need more advanced assumptions, like the \emph{multiple one-wayness}: for a polynomial number of pairs $(x_i, g\star x_i)$, computing $g \in G$ must still be intractable (see \cite{FFA:DalDis24} for the formal definition). In \cite{FFA:DalDis24}, the authors show that this assumption do not hold for some actions, even if they are assumed to be one-way. A way to model the Group Action Inversion Problem, given two set elements $x$ and $y$, is to write a system of polynomials having as a root the group element $g$ such that $y=g \star x$. Such a polynomial system is called \emph{algebraic model} for the problem and can be used to perform what is called an algebraic attack.

\subsubsection{Quotient Group Actions.}
Let $G$ be a group such that $G = G_1 \rtimes G_2$ for two subgroups $G_1$ and $G_2$. In particular, $G_1$ is a normal subgroup of $G$. Let $\sim_{G_1}$ be the equivalence relation on $X$ given by
\[
    x \sim_{G_1} y \iff \exists g \in G_1 \text{ s.t. } y = g \star x.
\]
Let $X/G_1 = \{[x]_{G_1} \mid x \in X\}$ be the set of equivalence classes of $X$ under $\sim_{G_1}$. As in \cite{d2026group}, we define the \emph{quotient action} of $G_2$ over the $X/G_1$ as follows
\[
    \star_{G_2} : G_2 \times X/G_1 \to X/G_1, \; (g,[x]_{G_1} ) \mapsto [g \star x]_{G_1}.
\]
More generally, a \emph{canonical form} $\mathsf{CF}: X \to X$ for the equivalence $\sim$ is a map such that
\begin{enumerate}
    \item $x \sim \mathsf{CF}(x)$;
    \item $\mathsf{CF}(x) = \mathsf{CF}(y)$ if and only if $x \sim y$.
\end{enumerate}
The existence of a polynomial-time computable canonical form for the equivalence $\sim_{G_1}$, and hence for the classes of $X/G_1$, implies that if $\star$ is effective then $\star_{G_2}$ is effective too \cite{d2026group}. Under some assumptions, the security estimates of the one-wayness of original action $\star$ transpose to the quotient one $\star_{G_2}$, and the latter can be used in cryptographic protocols replacing the former, bringing some advantages \cite{d2026group}. On the cryptanalytic side, analyzing the one-wayness of the quotient action is equivalent to analyzing the one-wayness of the starting one. Counter-intuitively, the same is not true for the multiple one-wayness; it can happen that even if $\star$ is not multiple one-way, then $\star_{G_2}$ can achieve this property.

\subsubsection{Code Equivalence.}
A well-known cryptographic group action studied in the literature is the one associated with the Linear Code Equivalence problem. This problem asks to retrieve the isometry between two equivalent codes, and can be embedded in the group action framework. A linear code over $\mathbb{F}_q$ of length $n$ and dimension $k$ is a vector subspace of dimension $k$ of $\mathbb{F}_q^n$ (and hence, an element of $\Grkn$) endowed with the \emph{Hamming distance}, a map counting the number of different entries between two vectors. A linear code $\mathcal{C}$ in $\Grkn$ can be represented by a generator matrix $\bm G$, that is an element of $\mathbb{F}_q^{k\times n}$ whose rows form a basis of the code. We say that $\bm G$ generates (or spans) the code $\mathcal{C}$. Of particular interest are the linear map which preserve the Hamming weight, since they give a way of classify linearly equivalent codes. Such maps are called linear isometries and are given by monomial matrices $\Monn$, i.e. product of non-singular diagonal matrices and permutation matrices. Hence, $\Monn$ acts on a linear code of length $n$ and dimension $k$ permuting and scaling its coordinates. This action can be represented as an action on its generator matrix. In order for the Group Action Inversion Problem to be intractable and thus of cryptographic interest, the action we consider requires a change of basis given by an invertible $k \times k$ matrix in $\mathrm{GL}_{k,q}$. In this case, the set $X$ of linear codes became the set $\mathbb{F}_q^{k\times n,k}$ of $k \times n$ matrices of rank $k$ and the group $G$ is the direct product $\mathrm{GL}_{k,q}\times \Monn$, leading to the action given by
\[
    \begin{array}{cccc}
   \star : & \left( \mathrm{GL}_{k,q}\times \Monn \right) \times \mathbb{F}_q^{k\times n,k} & \to & \mathbb{F}_q^{k\times n,k}, \\
    &((\bm S, \bm Q), \bm M) &\mapsto & \bm S^{-1} \bm M \bm Q.
\end{array}
\]
\begin{definition}
    The Linear Code Equivalence (LCE) problem with parameters $q,k,n$ asks, given $\bm G_1,\bm G_2 \in \mathbb{F}_q^{k\times n,k}$, to find $(\bm S,\bm Q) \in \mathrm{GL}_q(k)\times \Monn$ such that $\bm S \bm G_1 \bm Q = \bm G_2$.
\end{definition}
Even if one assumes that the LCE problem is intractable, and hence that the action is one-way, in \cite{FFA:DalDis24} it is shown that it is not multiple one-way.

In practice, since the size of the elements of the acting group is a crucial metric in applications like digital signatures, one can get rid of the invertible matrix $\bm S$ using the reduced row echelon form (RREF) for the matrix $\bm M \bm Q$. The reduced row echelon form can be seen as a canonical form for the classes of $\mathbb{F}_q^{k\times n,k}$ under the relation induced by the action of the subgroup $\mathrm{GL}_{q,k}$, falling into the framework from \cite{d2026group}. Hence, in the cryptographic literature, the Linear Code Equivalence is presented as the Group Action Inversion problem (\Cref{def:GAIP}) of the following group action
\begin{equation}
    \begin{array}{cccc}
    \star : \; &\Monn \times \mathbb{F}_q^{k\times n,k} &\to &  \mathbb{F}_q^{k\times n,k},\\
    &(\bm Q, \bm M) &\mapsto & \mathrm{RREF}(\bm M \bm Q).
\end{array}
\end{equation}
As shown in \cite{AC:BCDSK24,budroni2025sample}, even this action is not multiple one-way.

The group $\Monn$ can be further factorized as $\diagn \rtimes \mathcal{S}_n$, and a quotient action of $\mathcal{S}_n$ can be defined on the classes of the relation induced by $\diagn$, as done in \cite{d2026group}\footnote{Further factorizations of $\Monn$ can be exploited to reduce the sizes of the group elements, as in \cite{AC:PerSan23,DCC:ChoPerSan25} with the drawback that they do not directly lead to a group action.}. Observing that the set of linear codes is exactly the Grassmannian $\Grkn$, we can treat this new action as
\begin{equation}
    \begin{array}{cccc}
    \star_{\mathcal{S}_n} : \; &\mathcal{S}_n \times \Grkn/\diagn &\to &  \Grkn/\diagn\\
    &(\bm P, [V]_{\diagn}) &\mapsto & [\bm P \star V]_{\diagn}.
\end{array}
\end{equation}
In \cite{d2026group,cryptoeprint:2025/397}, the authors provide a polynomial-time canonical form for the classes of $\Grkn$ under the action of $\diagn$, showing that $\star_{\mathcal{S}_n}$ is an effective group action. Even if the one-wayness of $\star_{\mathcal{S}_n}$ is directly linked to the one of the starting Linear Code Equivalence action, its multiple one-wayness has not been studied. Hence, plugging the former in protocols which require multiple one-wayness, previously instantiated with the latter, is not secure a priori.

\section{Pl\"ucker Coordinates and the Diagonal Action}
\label{sec:pluck_diag}

We want to study the action of $\diagn$ on the Grassmannian and how it translates on its image via the Pl\"ucker embedding $\iota:\Grkn \hookrightarrow \mathbb{P}^{\binom{n}{k}-1}$. However, our results remain valid for any field $\K$. We therefore consider the diagonal group $\dia_n$ of non-singular diagonal matrices over $\K$.

First, let us consider the following action of the diagonal group $\dia_n$ over the whole projective space $\mathbb{P}^{\binom{n}{k}-1}$
\begin{equation}\label{startilde}
    \begin{array}{cccc}
    \tilde\star : \; &\dia_n \times \mathbb{P}^{\binom{n}{k}-1} &\to &  \mathbb{P}^{\binom{n}{k}-1}\\
    &\left(\left(\lambda_1,\dots,\lambda_n\right),\left(x_{I_1}:\dots:  x_{I_{\binom{n}{k}}} \right)\right) &\mapsto & \left( \left(\prod_{i\in I_1} \lambda_i\right) x_{I_1}:\cdots:\left(\prod_{i\in I_{\binom{n}{k}}} \lambda_i\right)x_{I_{\binom{n}{k}}}\right).
    \end{array}
\end{equation}
The above action behaves exactly as the image of the action of $\dia_n$ on $\Gr(k,n)$ under the Pl\"ucker embedding.
\begin{proposition}

    The action $\star$ of $\dia_n$ over the Grassamannian $\Gr(k,n)$ naturally induces an action of $\dia_n$ over the image $\iota(\Gr(k,n))$ of the Pl\"ucker embedding. The latter coincides with $\tilde\star$ restricted to $\iota(\Gr(k,n))$.
\end{proposition}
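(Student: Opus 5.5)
The plan is to compute directly with a generator matrix and then transport the action through the injective map $\iota$. Let $W\in\Gr(k,n)$ be spanned by the rows of $\bm A\in\K^{k\times n}$. Let $\lambda=(\lambda_1,\dots,\lambda_n)\in\dia_n$ be identified with the diagonal matrix $\bm D_\lambda=\mathrm{diag}(\lambda_1,\dots,\lambda_n)$. The action $\star$ sends $W$ to the space spanned by the rows of $\bm A\bm D_\lambda$; by the discussion of $\mathrm{GL}_k$-invariance preceding \eqref{PlucEmb}, this does not depend on the chosen basis.

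We first define the induced action on $\iota(\Gr(k,n))$ by
\[
\lambda\cdot\iota(W):=\iota(\lambda\star W).
\]
This is well defined because $\iota$ is injective. It satisfies the action axioms because $\star$ does and $\iota$ is a bijection onto its image. This settles the first claim.

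For the second claim, we compute the Plücker coordinates of $\lambda\star W$. For each $I=\{i_1<\dots<i_k\}$, the submatrix $(\bm A\bm D_\lambda)_I$ equals $\bm A_I\,\mathrm{diag}(\lambda_{i_1},\dots,\lambda_{i_k})$, since right multiplication by a diagonal matrix scales column $j$ by $\lambda_j$ and the column selection commutes with this scaling. Multiplicativity of the determinant then gives
\[
p_I(\lambda\star W)=\Bigl(\prod_{i\in I}\lambda_i\Bigr)p_I(W).
\]
Comparing with \eqref{startilde}, we obtain $\iota(\lambda\star W)=\lambda\,\tilde\star\,\iota(W)$. Hence the induced action coincides with $\tilde\star$ restricted to $\iota(\Gr(k,n))$, and in particular $\tilde\star$ preserves this image.

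The main obstacle is well-definedness on projective points. We need to check that $\tilde\star$ is a genuine action on $\mathbb{P}^{\binom{n}{k}-1}$, i.e. that it is independent of the representative: rescaling all $x_I$ by $c\neq0$ commutes with the coordinatewise scaling by $\prod_{i\in I}\lambda_i$. We also need that the image point is nonzero, which holds since all $\lambda_i\neq0$. Finally, the action axiom follows from
\[
\prod_{i\in I}(\lambda_i\mu_i)=\Bigl(\prod_{i\in I}\lambda_i\Bigr)\Bigl(\prod_{i\in I}\mu_i\Bigr).
\]
Together with the independence of $p_I(W)$ from the choice of basis up to a common factor $\det\bm S$, these checks complete the argument.
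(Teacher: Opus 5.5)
Your proof is correct and follows essentially the same route as the paper: you represent $W$ by a generator matrix, observe that $\lambda$ scales its columns, and read off $p_I(\lambda\star W)=\bigl(\prod_{i\in I}\lambda_i\bigr)p_I(W)$ from the determinant of the column-scaled submatrix, which gives $\iota(\lambda\star W)=\lambda\,\tilde\star\,\iota(W)$. Your additional checks are routine details that the paper leaves implicit: that $\tilde\star$ is well defined on projective points, that the action axioms hold, and that injectivity of $\iota$ lets you define the induced action.
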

\begin{proof}
    We want to show that for each $V$ in $\Gr(k,n)$ and each $\lambda\in \dia_n$, we have $\lambda \tilde\star \iota(V)=\iota(\lambda \star V)$. We know that if $\iota(V) = \left(p_{I_1}(V) : \cdots : p_{I_{\binom{n}{k}}}(V) \right)$, then
    \[
        \lambda \tilde\star \iota(V) = \left( \left(\prod_{i\in I_1} \lambda_i \right) p_{I_1}(V) :\cdots: \left(\prod_{i\in I_{\binom{n}{k}}} \lambda_i\right)p_{I_{\binom{n}{k}}}(V) \right).
    \]
    On the other hand, consider $\iota(\lambda \star V)$. To compute it, let $\bm A$ be a matrix whose rows form a basis of $V$. Then define the matrix \( \tilde {\bm A} \) whose \( i \)-th column is obtained by scaling
the \( i \)-th column of \( \bm A \) by \( \lambda_i \). The rows of $\tilde {\bm A}$ form a basis of $\lambda \star V$. By definition, the $I$-th coordinate $p_I(\lambda \star V)$ of $\iota(\lambda \star V)$ is the determinant of $\tilde {\bm A}_I$, the $k\times k$ submatrix of $\tilde {\bm A}$ indexed by the set $I$. This is exactly $\left( \prod_{i\in I}\lambda_i\right) p_I(V)$, and hence $\lambda \tilde\star \iota(V)=\iota(\lambda \star V)$.
\end{proof}

We aim to define non-trivial invariants of the orbit, that is, non-constant elements of the field of rational invariants $\mathbb{K}\left(\mathrm{Gr}(k,n)\right)^{\dia_n}$. As shown in \Cref{sec-introduction}, each invariant leads a polynomial equation for an algebraic model for LCE; for this reason we are interested in finding the algebraically independent ones, i.e. the generators of the field $\mathbb{K}\left(\mathrm{Gr}(k,n)\right)^{\dia_n}$. The minimal number of such generators is given by the transcendence degree of the field $\mathbb{K}\left(\mathrm{Gr}(k,n)\right)^{\dia_n}$ over $\K$. In general, for finite groups a minimal set of generators can be computed explicitly as shown in \cite[Chapt.~2]{sturmfels2008algorithms}. However, this could be infeasible in practice, as it involves the computation of Gr\"obner bases and Reynolds operators. Moreover, since the computation of the Reynolds operator \cite[Sect.~2.1]{sturmfels2008algorithms} has a cost which is linear in the cardinality of the group, it is not doable for groups like $\dia_n$.

Nevertheless, we propose a strategy to compute a set of independent generators for the field $\mathbb{K}\left(\mathrm{Gr}(k,n)\right)^{\dia_n}$ over $\K$ without relying on Gr\"obner bases or the Reynolds operator.

We first report the following combinatorial result, which is a direct consequence from \cite{wilson1990diagonal}.
\begin{lemma}\label{Lem:W}
    Let $\bm W_{k,n}$ be the matrix in $\mathbb{Z}^{\binom{n}{k}\times n}$ whose rows are indexed by elements of $\binom{[n]}{k}$ and the $I$-th row is $\sum_{i\in I}e_i$, where $e_i\in \{0,1\}^n$ is the $i$-th element of the canonical basis. Then
    \[
        \mathrm{rk}_{\mathbb{Z}} \left(\bm W_{k,n} \right) = 
        \begin{cases}
            0 & \text{ for } k=0;\\
            n & \text{ for } 1 \le k \le n-1;\\
            1 & \text{ for } k=n.
        \end{cases}
    \]
\end{lemma}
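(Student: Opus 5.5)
The plan is to dispose of the degenerate cases directly and then, for $1\le k\le n-1$, show that $\bm W_{k,n}$ has full column rank $n$. The rank over $\mathbb{Z}$ is understood as the rank over $\mathbb{Q}$.

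For $k=0$, $\binom{[n]}{0}$ contains only the empty set. Its row is the zero vector, so the rank is $0$. For $k=n$ there is a single row $(1,\dots,1)$, so the rank is $1$.

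Now fix $1\le k\le n-1$. Since $\bm W_{k,n}$ has $n$ columns, it suffices to show that the row space over $\mathbb{Q}$ contains every canonical vector $e_i$. I would do this by producing each $e_i$ explicitly as a rational combination of rows.

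First, take two indices $i\neq j$. Because $k\le n-1$, we can choose a $(k-1)$-subset $S\subseteq[n]\setminus\{i,j\}$; this needs $n-2\ge k-1$, which holds. Then
\[
e_i-e_j=\textstyle\sum_{s\in S\cup\{i\}}e_s-\sum_{s\in S\cup\{j\}}e_s ,
\]
so every difference $e_i-e_j$ lies in the row space.

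Next, any single row $\sum_{s\in I}e_s$ can be rewritten as $k\,e_i+\sum_{s\in I}(e_s-e_i)$. Every difference term is already in the row space, so $k\,e_i$ is in the row space. Since $k\ge1$ and we work over $\mathbb{Q}$, we can divide by $k$ and obtain $e_i$. Hence the rank is $n$.

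The only delicate point is the choice of ring. Over $\mathbb{Z}$ the row lattice may have index divisible by $k$, since we had to divide by $k$. So the statement has to be read as rank, which is invariant under passing to $\mathbb{Q}$, and not as a claim that the rows span $\mathbb{Z}^n$. With that reading no further obstacle arises.

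As an alternative, one could compute $\bm W^t\bm W=\binom{n-2}{k-2}J+\left(\binom{n-1}{k-1}-\binom{n-2}{k-2}\right)I$, where $J$ is the all-ones matrix. Its eigenvalues are $\binom{n-1}{k-1}-\binom{n-2}{k-2}=\binom{n-2}{k-1}>0$ and $\binom{n-1}{k-1}+(n-1)\binom{n-2}{k-2}>0$, so this Gram matrix is nonsingular and the rank is again $n$. This is the route matching Wilson's result cited in the statement.
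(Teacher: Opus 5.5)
Your proof is correct, but it takes a different route from the paper. The paper treats $k\in\{0,n\}$ as trivial, as you do. For $1\le k\le n-1$ it simply invokes Wilson's diagonal form for the incidence matrix of $t$-subsets versus $k$-subsets with $t=1$, and reads off the rank as $\sum_{i=0}^{1}\bigl(\binom{n}{i}-\binom{n}{i-1}\bigr)=n$.

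You instead give a self-contained computation: $e_i-e_j$ comes from two rows sharing a $(k-1)$-set, and then $k\,e_i$ comes from any row. This makes explicit where $k\le n-1$ and $k\ge 1$ are used. In the paper these conditions are hidden in the hypothesis $t\le\min(k,n-k)$ of Wilson's theorem.

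One further observation strengthens your argument: every row has coordinate sum $k$. With this, you pin down the row lattice exactly as $\{x\in\mathbb{Z}^n : k\mid \sum_i x_i\}$, of index exactly $k$, i.e.\ Smith form $\mathrm{diag}(1,\dots,1,k)$. That is precisely Wilson's diagonal form for $t=1$. So your ``delicate point'' can be sharpened from ``index divisible by $k$'' to ``index equal to $k$''.

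The citation buys brevity and places the lemma inside the general $t$-subset family. Your Gram-matrix alternative is also correct: the eigenvalue $\binom{n-2}{k-1}$ is positive exactly when $1\le k\le n-1$. However, it is a spectral argument that gives only the rational rank. It is your first, lattice-theoretic argument, not the Gram route, that mirrors the integral content of Wilson's result.
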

\begin{proof}
    For $k\in\{0,n\}$, the thesis easily follows. Let $k\notin \{0,n\}$. The matrix $\bm W_{k,n}$ is the \emph{incidence matrix of $t$-subsets vs. $k$-subsets} \cite{wilson1990diagonal}, with $t=1$. Its rank over $\mathbb{Z}$ is given by
    \[
        \mathrm{rk}_{\mathbb{Z}} \left(\bm W_{k,n} \right) =  \sum_{i=0}^t \binom{n}{i} - \binom{n}{i-1}.
    \]
    Hence, assuming $\binom{n}{-1}=0$, the rank of $\bm W_{k,n}$ is $\binom{n}{0}-\binom{n}{-1} + \binom{n}{1}-\binom{n}{0} =n $.
\end{proof}
Now, we can present our algorithm. The idea is that, if $\bm W_{k,n}$ is the matrix in $\mathbb{Z}^{\binom{n}{k}\times n}$ whose rows are indexed by elements of $\binom{[n]}{k}$ and the $I$-th row is $\sum_{i\in I}e_i$, then the elements of its left kernel can be associated to invariants in $\mathbb{K}\left(\mathrm{Gr}(k,n)\right)^{\dia_n}$. The left kernel $K(\bm W_{k,n})$ of $\bm W_{k,n}$ is a free $\mathbb{Z}$-module of rank $\binom{n}{k}-\mathrm{rk}_{\mathbb{Z}} \left(\bm W_{k,n} \right)$. Applying \Cref{Lem:W} and assuming that $k\notin \{0,n\}$, this rank is equal to $\binom{n}{k}-n$. For each element $v$ in $K(\bm W_{k,n})$, we can define the rational function
\begin{equation}\label{eq:fv}
    f_v := \prod_{I_j\in \binom{[n]}{k}}p_{I_j}^{v_{I_j}} \in \mathbb{K}\left(\mathrm{Gr}(k,n)\right),
\end{equation}
where the $p_{I_j}$ are the Pl\"ucker coordinates, and show that it is in $\mathbb{K}\left(\mathrm{Gr}(k,n)\right)^{\dia_n}$. Moreover, we have that
\begin{equation}\label{eq:kernel_vect}
    f_{v+w} = f_v\cdot f_w \;\text{ and }\; f_{m v} = (f_v)^m \qquad \forall m\in \mathbb{Z},\, \forall v,w\in \mathbb{Z}^{\binom{n}{k}}.
\end{equation}
Hence, we can find a basis $v_1,\dots,v_{\binom{n}{k}-n} \in \mathbb{Z}^{\binom{n}{k}}$ of $K(\bm W_{k,n})$ and compute the rational functions $f_1,\dots,f_{\binom{n}{k}-n}$ associated to these vectors. Since we are interested in a set of algebraic independent generators, we can use the criterion given in \Cref{Lem:Jac} to check whether the functions $f_1,\dots,f_{\binom{n}{k}-n}$ are independent and, if not, select the independent ones.

We can then summarize this procedure in the algorithm $\textsf{InvGen}$, given in \Cref{alg:gen}. We now show its correctness.
\begin{figure}[t]
    \centering
\fbox{%
\procedure[linenumbering,skipfirstln]{$\textsf{InvGen}(k,n,\K)$}
{
    \text{\textbf{Input:} positive integers $k,n$ such that $k\le n$ and a field $\K$.} \pcskipln \\ 
    \text{\textbf{Output:} $f_1,\dots,f_\ell$ alg. ind. generators of $\mathbb{K}\left(\mathrm{Gr}(k,n)\right)^{\dia_n}$. } \\
    \text{Define $\bm W_{k,n} \in \mathbb{Z}^{\binom{n}{k}\times n}$ as in \cref{Lem:W}}\\
    \text{Compute a basis $\{v_1,\dots,v_r\}$ for the left kernel $L\subset \mathbb{Z}^{\binom{n}{k}}$ of $\bm W_{k,n}$}\\
    \pcfor i=1,\dots, r \\
    \pcind \text{Set $g_i$ as $\prod_{I_j\in \binom{[n]}{k}}p_{I_j}^{(v_i)_{I_j}}$}\\
    \text{Use Prop. \ref{Lem:Jac} to determine the alg. ind. $g_{i_1},\dots,g_{i_\ell}$ among the $g_1,\dots,g_r$}\\
    \pcreturn g_{i_1},\dots,g_{i_\ell}
}
}
    \caption{Algorithm $\mathsf{InvGen}$ for the computation of a minimal set of independent generators of $\mathbb{K}\left(\mathrm{Gr}(k,n)\right)^{\dia_n}$.}
    \label{alg:gen}
\end{figure}

\begin{theorem}
    If $g_1,\dots,g_\ell$ is the output of $\mathsf{InvGen}(k,n,\K)$, then
    \[
        \K(g_1,\dots,g_\ell) = \K\left(\mathrm{Gr}(k,n)\right)^{\dia_n}.
    \]
\end{theorem}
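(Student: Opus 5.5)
We prove the two inclusions $\K(g_1,\dots,g_\ell)\subseteq \K(\Gr(k,n))^{\dia_n}$ and $\K(\Gr(k,n))^{\dia_n}\subseteq \K(g_1,\dots,g_\ell)$. We work in the affine chart $p_{I_0}\neq 0$, where $I_0=\{1,\dots,k\}$ is the lexicographically first index set. Every element of $\K(\Gr(k,n))$ is then a quotient of homogeneous polynomials of equal degree in the Pl\"ucker coordinates. Equivalently, it is a rational function in the $k(n-k)$ free entries of the RREF matrix $[\bm I_k\mid \bm B]$, whose minors give the $p_I$.

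\textbf{First inclusion.} For $v$ in the left kernel of $\bm W_{k,n}$, the proposition above on $\tilde\star$ gives
\[
\lambda\circ f_v=\prod_I\Bigl(\prod_{i\in I}\lambda_i\Bigr)^{-v_I}p_I^{v_I}=\prod_{i=1}^n\lambda_i^{-(v\bm W_{k,n})_i}\,f_v=f_v .
\]
Each column of $\bm W_{k,n}$ has exactly $\binom{n-1}{k-1}$ ones, so $v\bm W_{k,n}=0$ forces $\sum_I v_I=0$. Hence $f_v$ has degree zero and is a well-defined function on the projective variety. This shows every $g_i$, and so the field they generate, lies in the invariant field.

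\textbf{Second inclusion.} First we show $\K(g_1,\dots,g_r)$ (all the $g_i$) equals the invariant field; then we pass to the independent subset.
\begin{itemize}
\item By \eqref{eq:kernel_vect}, $\K(g_1,\dots,g_r)$ contains every $f_v$ with $v\in L$, because $v$ is a $\mathbb{Z}$-combination of the $v_i$.
\item Take an invariant $F=A/B$. Both $\K(\Gr(k,n))$ and the invariant field should be generated by degree-zero Laurent monomials in the $p_I$. Since $\dia_n$ is a torus, acting diagonally on these monomials with character $v\mapsto v\bm W_{k,n}$, we decompose $A$ and $B$ into weight components.
\item The standard torus argument: multiply $A$ and $B$ by a suitable monomial so that $B$ becomes semi-invariant, i.e.\ a single weight space. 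Invariance of $F$ then forces $A$ to be semi-invariant of the same weight.
\item Expand $F$ in monomials and divide by a fixed monomial of that weight. This writes $F$ as a quotient of $\K$-combinations of $f_v$ with $v\in L$.
\end{itemize}
We expect this step to be the main obstacle. The Pl\"ucker relations make the monomial expansion non-unique. The remedy is to work in the chart coordinates, i.e.\ the entries of $\bm B$. There the torus acts on $b_{ij}$ by $\lambda_i^{-1}\lambda_{k+j}$ (after normalising $p_{I_0}$), so $\K(\bm B)$ is a rational function field with a diagonal torus action. For such an action, the invariant field is generated by the invariant Laurent monomials in the $b_{ij}$, by the classical semi-invariant argument for tori. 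Each $b_{ij}=\pm p_{I_0\setminus\{i\}\cup\{k+j\}}/p_{I_0}$, so these monomials are exactly the $f_v$ supported on the index sets at distance one from $I_0$, together with $I_0$ itself. The argument needs $\K$ infinite so that the torus action separates weights. Over a finite field one instead treats $\dia_n$ as the algebraic group, which is the intended reading.

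\textbf{Passing to the independent subset.} The invariant field has transcendence degree $k(n-k)-(n-1)$, because the torus acts with generic stabiliser the scalars. \Cref{Lem:Jac} selects $g_{i_1},\dots,g_{i_\ell}$ forming a transcendence basis of $\K(g_1,\dots,g_r)$. It remains to upgrade this to equality of fields. Our plan is to check that the selected $g_{i_j}$ can be taken to be the monomials in the $b_{ij}$ indexed by a basis of the $\mathbb{Z}$-lattice of invariant exponents. Then every other $f_v$ is a Laurent monomial in them, with integer and not merely rational exponents, and the fields coincide. The point requiring care is exactly this lattice saturation, i.e.\ that the chosen kernel vectors generate $L$ over $\mathbb{Z}$ and not only over $\mathbb{Q}$.
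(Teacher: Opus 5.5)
\textbf{Verdict.} Your argument correctly shows that the full list $g_1,\dots,g_r$ generates the invariant field, by a different and more careful route than the paper's. The final step, passing to the independent subset, is a genuine gap that cannot be closed, because the statement is false for some runs of InvGen.

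\textbf{What works.} Your first inclusion matches the paper's. Your degree-zero check, which the paper omits, is needed. However, the reason is that each \emph{row} of $\bm W_{k,n}$ has $k$ ones, so the entries of $v\bm W_{k,n}$ sum to $k\sum_I v_I$; the column count plays no role.

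For the reverse inclusion the paper stays in the $p_I$. It shows that invariant monomials come from kernel vectors, then argues that a sum of non-invariant monomials cannot be invariant. This ignores three things:
\begin{enumerate}
\item monomial expansions in the $p_I$ are not unique modulo the Pl\"ucker relations;
\item a rational function is a quotient, not a sum of monomials;
\item over $\mathbb{F}_q$, invariance only forces $a\bm W_{k,n}\equiv 0 \pmod{q-1}$.
\end{enumerate}
Your reduction to the diagonal torus action $b_{ij}\mapsto\lambda_i^{-1}\lambda_{k+j}b_{ij}$ on the purely transcendental field $\K(b_{ij})$ removes all three issues. You are right that $\dia_n$ must be read as the algebraic torus. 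One correction to your sketch: the right mechanism is lowest terms in the UFD $\K[b_{ij}]$, which forces numerator and denominator to be semi-invariants of equal weight. Multiplying by a monomial shifts all weights uniformly, so it cannot make $B$ semi-invariant. With that fix, you have proved $\K(g_1,\dots,g_r)=\K(\Gr(k,n))^{\dia_n}$ for the \emph{full} list.

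\textbf{The gap.} InvGen fixes neither the $\mathbb{Z}$-basis of $L$ nor which algebraically independent subset the Jacobian test keeps. So you are not free to ``take'' the $g_{i_j}$ to be chart monomials. For other runs, a transcendence basis of $\K(g_1,\dots,g_r)$ need not generate it.

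\textbf{Counterexample.} Take $k=2$, $n=5$, $I_0=\{1,2\}$, and write $e_{ab}$ for the basis vector of $\mathbb{Z}^{10}$ indexed by $\{a,b\}$.
\begin{enumerate}
\item The chart monomials are $t_1=\frac{p_{14}p_{23}}{p_{13}p_{24}}$ and $t_2=\frac{p_{15}p_{23}}{p_{13}p_{25}}$, with exponent vectors $v_1,v_2$. The invariant field is $\K(t_1,t_2)$.
\item Put $v_3=e_{13}+e_{45}-e_{14}-e_{35}\in L$. In the chart, $f_{v_3}=\frac{p_{13}p_{45}}{p_{14}p_{35}}=\frac{t_2-t_1}{t_1(t_2-1)}$.
\item On the coordinates $\{2,4\},\{2,5\},\{4,5\}$ the vectors $v_1,v_2,v_3$ have a unimodular minor. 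Hence $v_1,\;v_2+2v_3,\;v_3$ extend to a $\mathbb{Z}$-basis of $L$.
\item List $v_1$ and $v_2+2v_3$ first and run the Jacobian test greedily. It keeps $t_1$ and $s=f_{v_2+2v_3}=\frac{t_2(t_2-t_1)^2}{t_1^2(t_2-1)^2}$, which are algebraically independent.
\item Yet $[\K(t_1,t_2):\K(t_1,s)]=3$, since $s$ has degree $3$ as a rational function of $t_2$ over $\K(t_1)$.
\end{enumerate}
So the statement is false as written. The paper's own proof simply asserts $\K(f_1,\dots,f_r)=\K(g_1,\dots,g_\ell)$ at exactly this point.

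\textbf{What is provable instead.} Any one of the following holds:
\begin{enumerate}
\item equality for the full list $g_1,\dots,g_r$;
\item the invariant field is algebraic over the selected subfield;
\item equality for a modified InvGen that returns the $f_v$ for a $\mathbb{Z}$-basis of the rank-$\ell$ sublattice $L_0\subset L$ of kernel vectors supported at distance one from $I_0$.
\end{enumerate}

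\textbf{Closing remarks to correct.} The remaining $f_v$ are not Laurent monomials in the chosen ones; $f_{v_3}$ above is an example. Also, $\ell$ vectors can never generate $L$ when $\ell<\binom{n}{k}-n$; the lattice whose saturation matters is $L_0$.
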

\begin{proof}
    For the rest of the proof, let $p$ denotes the unknowns $\left(p_{I_1}:\cdots: p_{I_{\binom{n}{k}}}\right)$. We first show that for every $v\in K(\bm W_{k,n})$, the rational function
\[
    f_v (p)= \prod_{I_j\in \binom{[n]}{k}}p_{I_j}^{v_{I_j}} \in \mathbb{K}\left(\mathrm{Gr}(k,n)\right)
\]
is an element of $\mathbb{K}\left(\mathrm{Gr}(k,n)\right)^{\dia_n}$. In other words, $f_v$ is invariant under the action of $\dia_n$ and satisfies $f_v(\lambda \star p ) = f_v(p)$ for each $\lambda \in \dia_n$.
Hence, we have
    \begin{equation*}\label{eq:f_v}
        f_v(\lambda \star p) = \prod_{I_j\in \binom{[n]}{k}}\left( \prod_{t\in I_j}\lambda_t \right)^{v_{I_j}}p_{I_j}^{v_{I_j}},
    \end{equation*}
    which can be rewritten as
    \begin{equation}\label{eq:f_v2}
       f_v(\lambda \star p) = \left(\prod_{I_j\in \binom{[n]}{k}} \prod_{t\in I_j}\lambda_t^{v_{I_j}} \right)  \left(\prod_{I_j\in \binom{[n]}{k}}p_{I_j}^{v_{I_j}}\right).
    \end{equation}
    Consider the first factor in \Cref{eq:f_v2}. For each $I \in \binom{[n]}{k}$, let $\mathbbm{1}_I$ denote the indicator function of $I$, i.e. $\mathbbm{1}_I(t)$ evaluates to $1$ if $t\in I$ and 0 otherwise. Then, we have that
    \begin{equation}\label{eq:f_v3}
        \prod_{I_j\in \binom{[n]}{k}} \prod_{t\in I_j}\lambda_t^{v_{I_j}} = \prod_{I_j\in \binom{[n]}{k}} \prod_{t=1}^n\lambda_t^{v_{I_j} \cdot \mathbbm{1}_{I_j}(t)} = \prod_{t=1}^n\lambda_t^{\sum_{I_j\in \binom{[n]}{k}}v_{I_j} \cdot \mathbbm{1}_{I_j}(t)}.
    \end{equation}
    Observe that the vector $\left(\mathbbm{1}_{I_1}(t), \dots, \mathbbm{1}_{I_{\binom{n}{k}}}(t)\right)^T$ is exactly $W_t$, the $t$-th column of $\bm W_{k,n}$. Hence, \Cref{eq:f_v3} can be written as
    \begin{equation}\label{eq:f_v4}
        \prod_{t=1}^n\lambda_t^{\sum_{I_j\in \binom{[n]}{k}}v_{I_j} \cdot \mathbbm{1}_{I_j}(t)} = \prod_{t=1}^n\lambda_t^{\langle v, W_t \rangle} = 1,
    \end{equation}
    where the last equality holds since $v$ is in the left kernel of $\bm W_{k,n}$. Hence, $f_v$ is invariant under the action of $\dia_n$. If we set $f_i$ as the rational function associated to the $i$-th vector of the base of $K(\bm W_{k,n})$, this completes the inclusion $\mathbb{K}\left(f_1,\dots,f_{\binom{n}{k}-n}\right) \subseteq \mathbb{K}\left(\mathrm{Gr}(k,n)\right)^{\dia_n}$.

    To prove the other inclusion, we proceed in two steps. First, we prove that every monomial in $\mathbb{K}\left(\mathrm{Gr}(k,n)\right)^{\dia_n}$ comes from an element $v$ of the left kernel of $\bm W_{k,n}$ via the correspondence given in \Cref{eq:fv}. Indeed, let $g$ be a monomial in $\mathbb{K}\left(\mathrm{Gr}(k,n)\right)^{\dia_n}$ and without loss of generality we can assume it to be monic. Then, there is an integer vector $a \in \mathbb{Z}^{\binom{n}{k}}$ such that
    \[
        g = \prod_{I_j\in \binom{[n]}{k}} p_{I_j}^{a_{I_j}}.
    \]
    Since it is invariant under the action of $\dia_n$, we have
    \[
        \prod_{I_j\in \binom{[n]}{k}}\left( \prod_{t\in I_j}\lambda_t \right)^{a_{I_j}}p_{I_j}^{a_{I_j}} = \prod_{I_j\in \binom{[n]}{k}} p_{I_j}^{a_{I_j}},
    \]
    and, proceeding as before, we obtain that $a$ is in the right kernel of $\bm W_{k,n}$. To conclude the proof, we show that the sum of monomials that are not in $\mathbb{K}\left(\mathrm{Gr}(k,n)\right)^{\dia_n}$ cannot belong to $\mathbb{K}\left(\mathrm{Gr}(k,n)\right)^{\dia_n}$. To show this, let $g$ and $h$ be monomials in $\mathbb{K}\left(\mathrm{Gr}(k,n)\right) \setminus \mathbb{K}\left(\mathrm{Gr}(k,n)\right)^{\dia_n}$, and, without loss of generality, assume that they are monic. If $K(\bm W_{k,n})$ denotes the left kernel of $\bm W_{k,n}$, this means that $g$ and $h$ are obtained by vectors $a$ and $b$ both in $\mathbb{Z}^{\binom{n}{k}}\setminus K(\bm W_{k,n})$. Hence
    \[
        g = \prod_{I_j\in \binom{[n]}{k}} p_{I_j}^{a_{I_j}} \text{ and } h = \prod_{I_j\in \binom{[n]}{k}} p_{I_j}^{b_{I_j}}.
    \]
    Now, suppose by contradiction that $g+h \in \mathbb{K}\left(\mathrm{Gr}(k,n)\right)^{\dia_n}$. Since $(g+h)(\lambda \star p) = (g+h)(p)$, and by definition $(g+h)(p) = g(p) + h(p)$, if $g+h$ is invariant under the action of $\dia_n$, we have 
    \[
        (g+h)(\lambda \star p) = (g+h)(p).
    \]
    The left hand side can also be written as
    \[
        (g+h)(\lambda \star p) = \prod_{I_j\in \binom{[n]}{k}}\left( \prod_{t\in I_j}\lambda_t \right)^{a_{I_j}}p_{I_j}^{a_{I_j}} + \prod_{I_j\in \binom{[n]}{k}}\left( \prod_{t\in I_j}\lambda_t \right)^{b_{I_j}}p_{I_j}^{b_{I_j}},
    \]
    while the right one as 
    \[
        (g+h)(p) = \prod_{I_j\in \binom{[n]}{k}} p_{I_j}^{a_{I_j}} + \prod_{I_j\in \binom{[n]}{k}} p_{I_j}^{b_{I_j}}.
    \]
    Using the same argument of the first part of the proof, we obtain that both the vectors $a$ and $b$ are in the right kernel of $\bm W_{k,n}$, which is a contradiction. This shows that all polynomials in $\mathbb{K}\left(\mathrm{Gr}(k,n)\right)^{\dia_n}$ are sums of monomials obtained by vectors in $K(\bm W_{k,n})$, and hence, from \Cref{eq:kernel_vect}, all the elements in $\mathbb{K}\left(\mathrm{Gr}(k,n)\right)^{\dia_n}$ arise from $f_1,\dots,f_{\binom{n}{k}-n}$. Therefore, we conclude that
    \[
        \mathbb{K}\left( f_1,\dots,f_{\binom{n}{k}-n} \right) = \mathbb{K}\left(\mathrm{Gr}(k,n)\right)^{\dia_n}.
    \]
    Finally, \Cref{Lem:Jac} proves the correctness of Algorithm \ref{alg:gen}, since, if $g_1,\dots,g_\ell$ are the $\ell$ algebraic independent elements among $f_1,\dots,f_{\binom{n}{k}-n}$, we have that $\mathbb{K}\left(f_1,\dots,f_{\binom{n}{k}-n} \right) = \mathbb{K}(g_1,\dots,g_\ell)$ and hence $\mathbb{K}(g_1,\dots,g_\ell) = \mathbb{K}\left(\mathrm{Gr}(k,n)\right)^{\dia_n}$.
\end{proof}


\begin{remark}\label{rem1}
    The number of such independent invariants is the transcendence degree of $\mathbb{K}\left(\mathrm{Gr}(k,n)\right)^{\dia_n}$ over $\mathbb{K}$. This is given by  \cite[Corollary p. 156]{popov1994invariant}, stating that it is equal to the dimension of $\mathrm{Gr}(k,n)$ minus the dimension of the generic orbit $\mathcal{O}$ of $\dia_n$, namely,
    \[
        \mathrm{tr}\deg_{\mathbb{K}}\left(\mathbb{K}\left(\mathrm{Gr}(k,n)\right)^{\dia_n}\right) = \dim(\Gr(k,n)) - \dim(\mathcal{O}).
    \]
    We know that $\dim(\Grkn)=k(n-k)$, while the dimension of a generic orbit of $\dia_n$ is $n-1$. Hence, the number of independent invariant rational functions is $k(n-k)-n+1$.
\end{remark}

\subsection{The example on $\Gr(2,4)$}\label{sect:example24}
Consider the case $k=2$ and $n=4$. We present an example in which we compute the Pl\"ucker coordinates and invariants under the action of $\dia_4$. We show that for $\Gr(2,4)$ there is only one algebraically independent invariant as stated in \Cref{rem1}, since $k(n-k) - n +1 = 1$.
Hence, let $V \in \Gr(2,4)$ be a vector space generated by the rank-2 matrix
\begin{equation*}
   \bm A = \begin{pmatrix}
a & b & c & d\\
e & f & g & h
\end{pmatrix}.
\end{equation*}
Let $\dia_4= \left\{ (\lambda_1,\lambda_2,\lambda_3,\lambda_4)\in (\mathbb{K}^\times)^4 \right\}$. Then $\lambda \star V$ is generated by
\begin{equation}
\begin{pmatrix}
\lambda_1 a & \lambda_2 b & \lambda_3 c & \lambda_4 d\\
\lambda_1 e & \lambda_2 f & \lambda_3 g & \lambda_4 h
\end{pmatrix}.
\end{equation}
The Pl\"ucker embedding can be computed as
\begin{equation}
    \begin{array}{cccc}
   \iota : \; &\Gr(2,4) & \hookrightarrow & \mathbb{P}^5 \\
     &V & \longmapsto & (p_{12}:p_{13}:p_{14}:p_{23}:p_{24}:p_{34})
\end{array}
\end{equation} 
where $p_{ij} = \det(\bm A_{\{i,j\}})$. The action induced by $\dia_4$ over $\iota (\Gr(2,4))$ is
\begin{align*}
    \tilde\star : \; &\dia_4 \times \iota (\Gr(2,4)) \to \iota (\Gr(2,4))\\ &\left(\left(\lambda_1,\lambda_2,\lambda_3,\lambda_4\right),\left(p_{12}:p_{13}:p_{14}:p_{23}:p_{24}:p_{34} \right)\right) \mapsto  \\
     & \; \left(\lambda_1\lambda_2 p_{12}:\lambda_1\lambda_3 p_{13}: 
\lambda_1\lambda_4 p_{14}: \lambda_2\lambda_3 p_{23}: \lambda_2\lambda_4 p_{24}: \lambda_3\lambda_4 p_{34}\right).
\end{align*}
Now we want to study the field of rational functions that are invariant under $\dia_4$, namely $\mathbb{K}\left(\Gr(2,4)\right)^{\dia_4}$ and find a transcendental basis over $\mathbb{K}$. Let
\begin{equation*}
    \bm W_{2,4} =
\begin{pmatrix}
1 & 1 & 0 & 0\\
1 & 0 & 1 & 0\\
1 & 0 & 0 & 1\\
0 & 1 & 1 & 0\\
0 & 1 & 0 & 1\\
0 & 0 & 1 & 1
\end{pmatrix}
\in \mathbb{Z}^{6\times 4}.
\end{equation*}
A basis of the left kernel of $\bm W_{2,4}$ is given by $\{v_1, v_2 \}$ with $v_1=(1,0,-1,-1,0,1)$ and $v_2=(0,1,-1,-1,1,0)$. From $\mu_{v_i}=\prod_{I_j\in \binom{[4]}{2}}p_{I_j}^{(v_i)_{I_j}}$, it follows
\[
    \mu_{v_1}=\frac{p_{12}p_{34}}{p_{14}p_{23}} \;\text{ and }\; \mu_{v_2}=\frac{p_{13}p_{24}}{p_{14}p_{23}}.
\]

In order to find the algebraically independent invariants, i.e. the minimal set of generators of $\mathbb{K}\left(\Gr(2,4)\right)^{\dia_4}$, we can use the criterion from \Cref{Lem:Jac}.
In particular, let
\[
F = p_{12}p_{34} - p_{13}p_{24} + p_{14}p_{23},
\]
to be the polynomial of the Pl\"ucker relation, then the differentials of $\mu_{v_1}, \mu_{v_2}$ and $F$ are given by
\[
\mathrm{d}\mu_{v_1} =
\left(
\frac{p_{34}}{p_{14}p_{23}},
0,
-\frac{p_{12}p_{34}}{p_{14}^2p_{23}},
-\frac{p_{12}p_{34}}{p_{14}p_{23}^2},
0,
\frac{p_{12}}{p_{14}p_{23}}
\right),
\]
\[
\mathrm{d}\mu_{v_2} =
\left(
0,
\frac{p_{24}}{p_{14}p_{23}},
-\frac{p_{13}p_{24}}{p_{14}^2p_{23}},
-\frac{p_{13}p_{24}}{p_{14}p_{23}^2},
\frac{p_{13}}{p_{14}p_{23}},
0
\right),
\]
and
\[
\mathrm{d}F = (p_{34}, -p_{24}, p_{23}, p_{14}, -p_{13}, p_{12}).
\]
It can be seen that the matrix whose rows are \(\mathrm{d}F\), \(\mathrm{d}\mu_{v_1}\), and \(\mathrm{d}\mu_{v_2}\) has rank \(2\) over the field $\mathbb{F}_4(p_{12},p_{13},p_{14},p_{23},p_{24},p_{34})$. Since $\mathrm{d}F$ is non-zero, then $\mu_{v_1}$ and $\mu_{v_2}$ are not algebraically independent.

However, in the case of $\Gr(2,4)$, one can directly apply the single Pl\"ucker relation $p_{12}p_{34}-p_{13}p_{24}+p_{14}p_{23}=0$. From that, it follows that
\[
    \mu_{v_2}= \frac{p_{13}p_{24}}{p_{14}p_{23}} = \frac{p_{12}p_{34} + p_{14}p_{23}}{p_{14}p_{23}} = \mu_{v_1}+1.
\]
This confirms that, as expected, in this case there is only one algebraically independent invariant.

\section{An Algebraic Geometric approach to solve LCE}\label{Sect:LCE}
We now specialise the techniques developed in \Cref{sec:pluck_diag} in the case of $\K=\mathbb{F}_q$. Let $\Grkn$ be the set of $k$-dimensional vector subspaces (i.e. linear codes) of $\mathbb{F}_q^n$. Recall the quotient group action
\[
\begin{array}{cccc}
    \star_{\mathcal{S}_n} : \; &\mathcal{S}_n \times \Grkn/\diagn &\to &  \Grkn/\diagn\\
    &(\bm P, [V]_{\diagn}) &\mapsto & [\bm P \star V]_{\diagn}.
\end{array}
\]
Since linear codes are represented by their generator matrices, we may, without loss of generality, consider the action on matrices rather than on vector spaces. Hence, in the following, $\bm P \star_{\mathcal{S}_n} [\bm G]_\diagn$ denotes a generator matrix of a code in the equivalence class $\bm P \star_{\mathcal{S}_n} [C]_\diagn$, where $C$ is the code generated by $\bm G$. For this reason we will omit the change of basis matrix $\bm S\in \mathrm{GL}_{k,q}$.

We assume that we can compute in polynomial time a representative for each class, using a canonical form $\mathsf{CF}$, see for instance \cite{d2026group,cryptoeprint:2025/397}. Hence, we can define the action $\star_{\mathcal{S}_n}$ on this set of representatives. Then, given a generator matrix $\bm G$ and a permutation matrix $\bm P$, we have
\[
\bm P \star_{\mathcal{S}_n} \mathsf{CF}(\bm G) := \mathsf{CF}(\bm G \bm P).
\]
Therefore, the Linear Code Equivalence problem reduces to the following one: given two matrices $\bm G$ and $\bm G'$, we want to find the permutation matrix $\bm P$ such that
\[
    \mathsf{CF}(\bm G') = \mathsf{CF}(\bm G \bm P).
\]
This is equivalent to find a permutation matrix $\bm P$ such that $\bm G \bm P$ is in the equivalence class of $\bm G'$ under the action of $\diagn$.

The goal of this section is to explicitly write down an algebraic modeling for this problem, i.e. a polynomial system over $\mathbb{F}_q(\mathcal{S}_n)$. The advantages of having a system which unknown is a permutation is given by the following fact. If $\mathsf{CF}(\bm G_2) = \mathsf{CF}(\bm G_1 \bm P)$ then we have that $\mathsf{CF}(\bm G_1) = \mathsf{CF}(\bm G_2 \bm P^{-1})$. Observing that $\bm P^{-1} = \bm P^T$, we get other equations in the same unknown of $\bm P$. In practice, this doubles the number of equations in our modeling.

Remember that $p_I(C)$, where $C$ is a linear code (and so a vector space), is a homogeneous polynomial of degree $\dim(C)=k$ in the entries of its generator matrix $\bm G$. In fact, for $I\in \binom{[n]}{k}$, if $\bm B=(b_{ij})$ is the square $k\times k$ submatrix of $\bm G$ indexed by $I$, we have
\[
    p_I (C) = \det(\bm B) = \sum_{\sigma \in \mathcal{S}_k} \mathrm{sgn}(\sigma) \prod_{j=1}^n b_{\sigma(j)j}.
\]

Recall that, given a permutation matrix $\bm P$, the entries of $\bm G\bm P$ are bilinear in the entries of both $\bm P$ and $\bm G$, since
\[
(\bm G \bm P)_{ij} = \sum_{k=1}^n (\bm G)_{ik}\bm P_{kj}.
\]
\begin{remark}
    Any monomial matrix $\bm Q$ can be written as the product of a diagonal matrix and a permutation matrix, namely $\bm Q=\bm D\bm P$ and $\bm Q=\bm P\bm D'$. Since multiplication by a diagonal matrix does not affect the $\diagn$-equivalence class, the code generated by $\bm G\bm Q$ lies in the same equivalence class as the code generated by $\bm G\bm P$, therefore
\[
    [\bm G \bm Q]_{\diagn} = [\bm G \bm D \bm P]_{\diagn} = [\bm G \bm P \bm D']_{\diagn} = [\bm G \bm P]_{\diagn}.
\]
\end{remark}
By the theory developed in \Cref{sec:pluck_diag}, for each invariant in $\mathbb{F}_q\left(\Grkn \right)^\diagn$, we can write a polynomial equation whose solutions are the entries of $\bm P$. This is summarized in the following theorem.
\begin{theorem}\label{th:eqs}
    Let $\bm G_1$ and $\bm G_2$ be generator matrices of two equivalent codes on $\mathbb{F}_q$ of dimension $k$ and length $n$. Suppose that they are equivalent via the monomial matrix $\bm Q=\bm P \bm D$. Let $\mu \in \mathbb{F}_q\left(\Grkn \right)^\diagn$ such that $\mu$ is defined on the code generated by $\bm G_2$, then there exists a polynomial $h\in \mathbb{F}_q[x_{1,1},\dots,x_{n,n}]$ 
    which vanishes on the entries of the permutation matrix $\bm P$.
\end{theorem}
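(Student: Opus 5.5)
We construct $h$ by clearing denominators in the identity $\mu(\bm G_2)=\mu(\bm G_1\bm X)$, where $\bm X=(x_{i,j})$ is a matrix of indeterminates. By the theorem on $\mathsf{InvGen}$, $\mu$ is a rational expression in the monomial invariants $f_v$ of \Cref{eq:fv}, so we write $\mu=a/b$ with $a,b$ polynomials in the Plücker coordinates $p_I$. Since $\mu$ is $\diagn$-invariant, we first reduce to $a,b$ homogeneous of the same degree $d$: an invariant ratio of forms on the cone over $\Grkn$ must be of degree zero, and scalar matrices lie in $\diagn$. This makes $\mu$ well defined on the projective points $\iota(C)$.

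\textbf{Step 1: invariance.} By the Remark preceding the statement, $\bm G_2$ and $\bm G_1\bm P$ generate codes in the same $\diagn$-class, possibly after a change of basis $\bm S\in\mathrm{GL}_{k,q}$. Under such a change of basis all $p_I$ are multiplied by $\det\bm S$. Under the diagonal part, the action $\tilde\star$ (shown in the Proposition of \Cref{sec:pluck_diag} to coincide with the induced action on $\iota(\Grkn)$) rescales each $p_I$ by $\prod_{i\in I}\lambda_i$. Hence $\mu(\bm G_2)=\mu(\bm G_1\bm P)$, provided $\mu$ is defined at $\bm G_1\bm P$. It is: $\bm G_1\bm P$ lies in the same orbit as $\bm G_2$, where $\mu$ is defined by hypothesis, and the domain of an invariant function is stable under the group.

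\textbf{Step 2: polynomiality in $\bm X$.} The entries of $\bm G_1\bm X$ are linear in the $x_{i,j}$, so each $p_I(\bm G_1\bm X)$ is a homogeneous polynomial of degree $k$ in the $x_{i,j}$. Consequently $a(p(\bm G_1\bm X))$ and $b(p(\bm G_1\bm X))$ are polynomials of degree $dk$ in $\mathbb{F}_q[x_{1,1},\dots,x_{n,n}]$. Writing $c=\mu(\bm G_2)\in\mathbb{F}_q$, which makes sense because $\mu$ is defined on the code of $\bm G_2$, we set
\[
h(\bm X)=a\bigl(p(\bm G_1\bm X)\bigr)-c\, b\bigl(p(\bm G_1\bm X)\bigr).
\]
Step 1 gives $b(p(\bm G_1\bm P))\neq 0$ and $a/b=c$ at $\bm P$, so $h(\bm P)=0$.

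\textbf{Main obstacle.} The delicate point is the independence of representatives. Since $a$ and $b$ have the same degree $d$, replacing $p(\bm G_1\bm P)$ by the Plücker vector of another basis of the same code (for instance $\mathrm{RREF}$, or $\mathsf{CF}$) multiplies $a$ and $b$ by the same factor $(\det\bm S)^d$, and the diagonal rescaling acts on them by the same character. So $c$ can be computed from any generator matrix of the code of $\bm G_2$, and the vanishing at $\bm P$ holds for the raw product $\bm G_1\bm P$ rather than only for $\mathsf{CF}(\bm G_1\bm P)$.

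One should also check that $h$ is not identically zero. In general this is a genericity condition, and the statement only asks for vanishing, so I would not need it. For the monomial invariants $\mu=f_v$ it is explicit: $a$ and $b$ are products of $p_I$, and $h$ has the stated degree and monomial count.
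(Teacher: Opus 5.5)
Your proposal is correct and follows the paper's proof. $\diagn$-invariance gives $\mu(\bm G_1\bm P)=\mu(\bm G_2)$, each $p_I(\bm G_1\bm X)$ is a form of degree $k$ in the $x_{i,j}$, and clearing the denominator of $\mu=a/b$ yields $h(\bm X)=a(p(\bm G_1\bm X))-\mu(\bm G_2)\,b(p(\bm G_1\bm X))$; your remarks on the factor $\det\bm S$ and on the domain of $\mu$ make explicit what the paper leaves implicit.

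Two justifications need tightening, though neither changes the conclusion. The equal-degree form of $a/b$ needs neither the $\mathsf{InvGen}$ theorem nor invariance under scalars. In fact the scalar argument fails: over $\mathbb{F}_q$ the scalars in $\diagn$ form a finite group, and e.g. $p_I^{q-1}$ is $\diagn$-invariant on the cone. The equal-degree form holds simply because $\mu$ is a rational function on the projective variety $\Grkn$.

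Second, regularity of $\mu$ at $\bm G_1\bm P$ does not force your particular $b$ to be nonzero there. However, if $b(p(\bm G_1\bm P))=0$ then also $a(p(\bm G_1\bm P))=0$: compare with a representative $a'/b'$ satisfying $b'(p(\bm G_1\bm P))\neq 0$, using that $ab'-a'b$ vanishes on $\Grkn$. So $h(\bm P)=0$ regardless.
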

\begin{proof}
    Let $\mu\in \mathbb{F}_q\left(\Grkn \right)^\diagn$ such that $\mu$ is defined on the code generated by  $\bm G_2$. Without loss of generality we can identify codes by their generator matrices, and when we evaluate $\mu$ in a matrix $\bm G$, we are evaluating it in the code spanned by $\bm G$.
    Since there exist matrices $\bm S,\bm P,\bm D$ such that $\bm S \bm G_1 \bm P \bm D = \bm G_2$, we have
    \[
        [\bm G_2]_\diagn = [\bm S \bm G_1 \bm P \bm D]_\diagn = [\bm G_1 \bm P]_\diagn .
    \]
    Hence, because $\bm G_1 \bm P$ and $\bm G_2$ belong to the same equivalence class under $\diagn$, it follows that $\mu(\bm G_1 \bm P) = \mu(\bm G_2)$. The rational function $\mu$ is defined in terms of the Pl\"ucker coordinates: let $I\in \binom{[n]}{k}$, then
    \[
        p_I(\bm G \bm P) = \det \left( (\bm G \bm P)_I \right),
    \]
    where $(\bm G \bm P)_I$ is the square $k \times k$ submatrix of $\bm G \bm P$ whose columns are indexed by $I$. Hence,  $p_I(\bm G \bm P)$ is a homogeneous polynomial of degree $k$ in the entries of $(\bm G \bm P)_I$, which are in turn linear in the entries of $\bm P$. In short, each Pl\"ucker coordinate of $\bm G \bm P$ is a homogeneous polynomial of degree $k$ in the entries of $\bm P$.

    Since $\mu(\bm G_1 \bm P) = \mu (\bm G_2)$, if $\mu=\frac{g}{f}$, the polynomial $h\in \mathbb{F}_q[x_{1,1},\dots,x_{n,n}]$ is given by 
    \[
    h(\bm X)=g(\bm G_1 \bm X)-\mu(\bm G_2) f(\bm G_1 \bm X),
    \]
    where $\bm X=(x_{ij})$. By construction, $h$ vanishes when evaluated at $\bm P$.
\end{proof}
This means that, given two equivalent codes, for each invariant defined on one of them, we can construct a polynomial equation whose root is the permutation matrix that realizes the equivalence.

To find invariants, one can use Algorithm \ref{alg:gen}, but, for cryptographically relevant parameters, where $k = O(n)$, the matrix $\bm W_{k,n}$ has an exponential number of rows given by $\binom{n}{k}\simeq 2^n/\sqrt{n}$. Hence, an alternative method is needed. Observe that finding elements in the left kernel of $\bm W_{k,n}$ is feasible without writing them nor the matrix explicitly.  Indeed, let $I_1,J_1,I_2,J_2\in \binom{[n]}{k}$ be such that $I_1 \uplus J_1 =  I_2 \uplus J_2$. Then the vector $v \in \mathbb{Z}^{\binom{n}{k}}$ defined as
\[
    v_{I} = \begin{cases}
        1 & \text{if }I=I_1,J_1 ;\\
        -1 & \text{if }I=I_2,J_2 ;\\
        0 & \text{otherwise}
    \end{cases}
\]
lies in the left kernel of $\bm W_{k,n}$ and leads to the invariant $\frac{p_{I_1}p_{J_1}}{p_{I_2}p_{J_2}}$, which is computable in polynomial time, without writing down $v$.
\begin{lemma}
    \label{prop:IJ}
    Let $I_1,J_1,I_2,J_2\in \binom{[n]}{k}$ such that $I_1\uplus J_1 = I_2\uplus J_2$. Then $\frac{p_{I_1}p_{J_1}}{p_{I_2}p_{J_2}}$ is invariant under the action of $\diagn$.
\end{lemma}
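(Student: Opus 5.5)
The plan is to reduce the statement to the general invariance result already proved for the functions $f_v$ of \Cref{eq:fv}. Concretely, I would take the vector $v\in\mathbb{Z}^{\binom{n}{k}}$ introduced just before the lemma, with $v_{I_1}=v_{J_1}=1$, $v_{I_2}=v_{J_2}=-1$ and all other entries zero. Here I read $v$ as the vector of multiplicities, i.e. $v=e_{I_1}+e_{J_1}-e_{I_2}-e_{J_2}$ in terms of the standard basis indexed by $\binom{[n]}{k}$. This reading matters when some of the four sets coincide; for instance $I_1=J_1$ gives $v_{I_1}=2$. With this choice, $f_v=\frac{p_{I_1}p_{J_1}}{p_{I_2}p_{J_2}}$ directly from the definition. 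So it suffices to show $v\in K(\bm W_{k,n})$ and then invoke the first part of the proof of the preceding theorem, which shows that $f_v$ is $\dia_n$-invariant for every $v$ in the left kernel.

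To check that $v$ lies in the left kernel, I would compute $\langle v, W_t\rangle$ for each column $W_t$ of $\bm W_{k,n}$, $t\in[n]$. Since the $I$-th entry of $W_t$ is $\mathbbm{1}_I(t)$, this gives
\[
\langle v,W_t\rangle=\mathbbm{1}_{I_1}(t)+\mathbbm{1}_{J_1}(t)-\mathbbm{1}_{I_2}(t)-\mathbbm{1}_{J_2}(t).
\]
The first two terms count the multiplicity of $t$ in the multiset $I_1\uplus J_1$, and the last two count its multiplicity in $I_2\uplus J_2$. By hypothesis these two multisets are equal, so the expression vanishes for every $t$.

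As an independent check, or as a self-contained alternative, I would verify the invariance directly from the action $\tilde\star$. Under $\lambda\in\diagn$, each $p_I$ is multiplied by $\prod_{i\in I}\lambda_i$. The numerator of the quotient therefore picks up the factor $\prod_{t}\lambda_t^{m_1(t)}$, where $m_1(t)$ is the multiplicity of $t$ in $I_1\uplus J_1$. The denominator picks up the analogous factor with the multiplicity in $I_2\uplus J_2$. These two factors are equal by hypothesis, so they cancel.

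The only point needing care is the well-definedness of the quotient. Two things must be checked: the function is homogeneous of degree $0$ in the Plücker coordinates, so it is a genuine rational function on $\mathbb{P}^{\binom{n}{k}-1}$; and it is defined on the open set where $p_{I_2}p_{J_2}\neq0$, which $\diagn$ preserves because the scalings are nonzero. Degree zero is clear, since both numerator and denominator have degree $2$. I do not expect a real obstacle beyond these remarks.
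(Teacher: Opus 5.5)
Your proof is correct. The paper's proof is exactly your ``independent check'': it substitutes $p_I(\lambda\star C)=\bigl(\prod_{i\in I}\lambda_i\bigr)p_I(C)$ into $\mu$. It then collects the scalars as $\prod_{i\in I_1\uplus J_1}\lambda_i$ over $\prod_{i\in I_2\uplus J_2}\lambda_i$ (multiset products, i.e.\ your multiplicities) and cancels them.

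Your primary route instead encodes the same multiplicity count as $\langle v,W_t\rangle=0$ for every column $W_t$ of $\bm W_{k,n}$, and then appeals to the first half of the proof of the preceding theorem. This is legitimate. It uses only the inclusion $\K\bigl(f_1,\dots,f_{\binom{n}{k}-n}\bigr)\subseteq\K\bigl(\Gr(k,n)\bigr)^{\dia_n}$, which is the straightforward computational part of that proof, not the reverse inclusion.

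The reduction shows that the lemma is just the special case of kernel vectors with two $+1$ and two $-1$ entries. Your multiplicity reading $v=e_{I_1}+e_{J_1}-e_{I_2}-e_{J_2}$ also handles coincidences such as $I_1=J_1$ or $I_1=I_2$, where the paper's case-wise definition of $v$ is ambiguous. The direct computation, on the other hand, is self-contained and independent of the earlier theorem.

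Your remarks on degree-zero homogeneity and on $\diagn$ preserving the locus $p_{I_2}p_{J_2}\neq 0$ are left implicit in the paper's proof of the lemma. The paper addresses definedness only later, at $\bm G_2$, in the proof of \Cref{th:main}.
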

\begin{proof}
    Let $C$ be in $\Grkn$ and let $\lambda= (\lambda_1,\dots,\lambda_n)\in\diagn$. Then $C$ and $\lambda \star C$ are in the same class under $\diagn$. The action of $\lambda$ on the Pl\"ucker is then given by
    \[
        p_I(\lambda \star C) = \left( \prod_{i\in I} \lambda_i\right) p_I(C).
    \]
    We apply this fact to the invariant $\mu=\frac{p_{I_1}p_{J_1}}{p_{I_2}p_{J_2}}$, obtaining 
    \begin{align*}
        \mu(\lambda \star C) & = \frac{p_{I_1}(\lambda \star C) p_{J_1}(\lambda \star C)}{p_{I_2}(\lambda \star C) p_{J_2}(\lambda \star C)} \\
       & = \frac{\left(\prod_{i_1\in I_1}\lambda_{i_1}\right)p_{I_1}(C) \left(\prod_{j_1\in J_1}\lambda_{j_1}\right) p_{J_1}(C)}{\left( \prod_{i_2\in I_2}\lambda_{i_2} \right) p_{I_2}(C) \left( \prod_{j_2\in J_2}\lambda_{j_2} \right) p_{J_2}(C)}  \\
       & = \frac{\left(\prod_{i\in I_1\uplus J_1}\lambda_{i}\right)p_{I_1}(C) p_{J_1}(C)}{\left( \prod_{i\in I_2\uplus J_2}\lambda_{i} \right) p_{I_2}(C) p_{J_2}(C)} \\
       & = \frac{p_{I_1}(C) p_{J_1}(C)}{p_{I_2}(C) p_{J_2}(C)}  = \mu(C)
    \end{align*}
    where the products of the $\lambda$'s cancel out since $I_1\uplus J_1 = I_2 \uplus J_2$.
\end{proof}

This method gives a polynomial-time procedure to obtain an invariant $\mu$ of degree 2 in the Pl\"uckers coordinates.

If we restrict the available invariants to the ones from \Cref{prop:IJ}, we reduce the problem of choosing subsets $I_1,J_1,I_2,J_2\in \binom{[n]}{k}$ such that $I_1\uplus J_1 = I_2\uplus J_2$ and $p_{I_2}(C_2)p_{J_2}(C_2)\ne 0$. For cryptographic relevant $n,k$ and $q$ and a random $C_2$, the condition $p_{I_2}(C_2)p_{J_2}(C_2)\ne 0$ holds with high probability since it defines a non-empty open set in the Zariski topology. 

We can summarize this fact in the following result.
\begin{theorem}\label{th:main}
    Let $\bm G_1$ and $\bm G_2$ be a random instance of the LCE problem, i.e. generator matrices of two equivalent codes over $\mathbb{F}_q$ of dimension $k$ and length $n$. Suppose that they are equivalent via the monomial matrix $\bm Q=\bm P \bm D$. 
    Then, with overwhelming probability over the choice of $\bm G_1$ and $\bm G_2$, there exists a polynomial $h\in \mathbb{F}_q[x_{1,1},\dots,x_{n,n}]$ of degree $2k$ with $2(k!)^2$ monomials which vanishes on the entries of the permutation matrix $\bm P$.
\end{theorem}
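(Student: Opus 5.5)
The plan is to specialise \Cref{th:eqs} to the degree-two invariants of \Cref{prop:IJ}. First I fix subsets $I_1,J_1,I_2,J_2\in\binom{[n]}{k}$ with $I_1\uplus J_1=I_2\uplus J_2$ and $\{I_1,J_1\}\neq\{I_2,J_2\}$. For $1\le k\le n-1$ such a choice exists: take $I_1=A\cup\{a\}$, $J_1=A'\cup\{b\}$ and swap $a,b$ between the two sets. By \Cref{prop:IJ}, the function
\[
\mu=\frac{p_{I_1}p_{J_1}}{p_{I_2}p_{J_2}}
\]
lies in $\mathbb{F}_q(\Grkn)^{\diagn}$.

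Next I ensure that $\mu$ is defined on the code $C_2$ spanned by $\bm G_2$, i.e. that $p_{I_2}(C_2)p_{J_2}(C_2)\neq0$. This is the probabilistic step. For a random generator matrix, a fixed $k\times k$ minor is nonzero with probability
\[
\prod_{i=1}^{k}(1-q^{-i})\ge 1-\frac{1}{q-1}-\cdots,
\]
which is bounded away from zero, and more importantly $I_2,J_2$ can be chosen adaptively. Since $\bm G_2$ is known, I first compute an information set $I_2$ with $p_{I_2}(C_2)\ne0$, which always exists because $\bm G_2$ has rank $k$. I then look for $J_2$, also an information set, such that some swap produces $I_1,J_1$. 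The complementary event, where no two information sets differ in a way allowing a nontrivial swap, is negligible for random codes. For example, take $J_2=I_2$ and $I_1=I_2\setminus\{a\}\cup\{b\}$, $J_1=I_2\setminus\{b'\}\cup\{a'\}$; this needs care to make the multisets match. The simplest choice is $J_2=I_2$, $I_1=(I_2\setminus\{a\})\cup\{b\}$, $J_1=(I_2\setminus\{b\})$ — but that has wrong size. So instead I use two distinct information sets $I_2, J_2$ with $a\in I_2\setminus J_2$ and $b\in J_2\setminus I_2$, and set $I_1=(I_2\setminus\{a\})\cup\{b\}$, $J_1=(J_2\setminus\{b\})\cup\{a\}$. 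Genericity (a Zariski-open condition, as remarked before the theorem) gives success with overwhelming probability.

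Then I apply the construction in the proof of \Cref{th:eqs} with $g=p_{I_1}p_{J_1}$, $f=p_{I_2}p_{J_2}$ and $c=\mu(\bm G_2)\in\mathbb{F}_q$:
\[
h(\bm X)=p_{I_1}(\bm G_1\bm X)\,p_{J_1}(\bm G_1\bm X)-c\,p_{I_2}(\bm G_1\bm X)\,p_{J_2}(\bm G_1\bm X).
\]
By \Cref{th:eqs}, $h(\bm P)=0$. Each factor $p_I(\bm G_1\bm X)$ is homogeneous of degree $k$ in the entries of $\bm X$, so $h$ is homogeneous of degree $2k$. It is not identically zero, because distinct pairs of Plücker coordinates are generically distinct functions.

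Finally, for the monomial count I view the determinant expansion $\det((\bm G_1\bm X)_I)=\sum_{\sigma\in\mathcal S_k}\mathrm{sgn}(\sigma)\prod_j(\bm G_1\bm X)_{\sigma(j),i_j}$ as having $k!$ terms, each a product of $k$ entries of $\bm G_1\bm X$. A product of two such determinants therefore has $(k!)^2$ terms, and $h$ has $2(k!)^2$. This is the main obstacle. Read literally, each entry $(\bm G_1\bm X)_{ij}$ is a linear form in $n$ variables, so the true monomial count in the $x_{ij}$ depends on how one counts. I would state the count at the level of products of entries of $\bm G_1\bm X$, treating these as the atomic terms the paper has in mind, and note it as an upper bound on the number of such terms before expansion and cancellation.
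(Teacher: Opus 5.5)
Your argument is the paper's: the degree-two invariant of \Cref{prop:IJ} is fed into the construction of \Cref{th:eqs}, the degree $2k$ comes from homogeneity of the minors, and each determinant contributes $k!$ Leibniz terms. Your caveat on the count is correct, and it is not a gap relative to the paper: the paper's $2(k!)^2$ is exactly this count of terms in the entries of $\bm G_1\bm X$. In the variables $x_{i,j}$ the count is different. Cauchy--Binet gives $p_I(\bm G_1\bm X)=\sum_{L\in\binom{[n]}{k}}p_L(\bm G_1)\det(\bm X_{L,I})$ (rows $L$, columns $I$), which has $k!\cdot\#\{L : p_L(\bm G_1)\neq 0\}$ distinct monomials. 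For the $\bm G_1$ of \Cref{subs:example_poly} this is already $12$ rather than $2$ per factor, so the claimed count holds only in your reading. Your adaptive choice of $I_2,J_2$ as information sets of $\bm G_2$ is a genuine improvement. The paper fixes the index sets and appeals to Zariski-openness, but over a fixed $\mathbb{F}_q$ a fixed maximal minor of a random code vanishes with probability about $1/q$, which is not negligible.

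There is one slip to repair. Your swap yields a nontrivial invariant only if $|I_2\setminus J_2|\ge 2$. If $I_2\triangle J_2=\{a,b\}$, then $I_1=J_2$ and $J_1=I_2$, so $\mu\equiv 1$, $c=1$ and $h\equiv 0$. This violates the condition $\{I_1,J_1\}\neq\{I_2,J_2\}$ you imposed at the start.

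Relatedly, your opening existence claim fails for $k\in\{1,n-1\}$, where every relation $I_1\uplus J_1=I_2\uplus J_2$ forces $\{I_1,J_1\}=\{I_2,J_2\}$. Nontrivial tuples exist exactly for $2\le k\le n-2$, matching the transcendence degree $k(n-k)-n+1=(k-1)(n-k-1)$ of \Cref{rem1}.

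The fix is easy. Write $\bm G_2$ in systematic form on $I_2$ with redundancy part $\bm R$. Exchanging two pivot positions for two positions outside $I_2$ gives an information set $J_2$ iff the corresponding $2\times 2$ minor of $\bm R$ is nonzero. Such a minor exists unless $\mathrm{rk}(\bm R)\le 1$, which has negligible probability for a random code.

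Finally, make the non-vanishing of $h$ precise. Since $\bm G_1$ has full rank, the entries of $\bm G_1\bm X$ are linearly independent linear forms. The maximal minors of a generic $k\times n$ matrix are irreducible and pairwise non-associate, so $p_{I_1}p_{J_1}\neq c\,p_{I_2}p_{J_2}$ whenever $\{I_1,J_1\}\neq\{I_2,J_2\}$. To get $2(k!)^2$ rather than $(k!)^2$ terms you also need $c\neq 0$, i.e.\ $I_1,J_1$ should be information sets of $\bm G_2$ as well.
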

\begin{proof}
    For every $I_1,J_1,I_2,J_2\in \binom{[n]}{k}$ such that $I_1\uplus J_1 = I_2\uplus J_2$, \Cref{prop:IJ} ensure that the rational function $\mu = \frac{p_{I_1}p_{J_1}}{p_{I_2}p_{J_2}}$ is invariant under the action of $\diagn$. In order to apply \Cref{th:eqs}, we need to verify that $\mu$ is defined on $\bm G_2$. This is equivalent to the constraint $p_{I_2}(\bm G_2)p_{J_2}(\bm G_2)\ne 0$. This condition defines a non-empty Zariski-open subset of $\Gr_q(k,n)$. Hence, a random code $C$, i.e. a code generated by a uniformly random full-rank $k\times n$ matrix, satisfies this condition with overwhelming probability.

    Therefore, the polynomial which vanishes on $\bm P$ given by
    \[
        p_{I_1}(\bm G_1 \bm X)p_{J_1}(\bm G_1 \bm X)- \mu(\bm G_2) p_{I_2}(\bm G_1 \bm X)p_{J_2}(\bm G_1 \bm X) \in \mathbb{F}_q[x_{1,1},\dots,x_{n,n}]
    \]
    has $(k!)(k!) + (k!)(k!)=2(k!)^2$ monomials coming from $p_{I_i}(\bm G_1 \bm X)p_{J_i}(\bm G_1 \bm X)$ for $i=1,2$ and is of degree $2k$ in the entries of $\bm X$, since each $p_{I_i}(\bm G_1 \bm X)$ and $p_{J_i}(\bm G_1 \bm X)$ has degree $k$.
\end{proof}

\subsection{The Running Example on $\mathrm{Gr}_5(2,4)$}\label{subs:example_poly}

Recall the example from \Cref{sect:example24}. We obtained that, working on $\mathrm{Gr}(2,4)$, one algebraically independent invariant rational function is given by
\[
    \mu = \frac{p_{12}p_{34}}{p_{14}p_{23}} \in \K(\mathrm{Gr}(2,4))^{\dia_4}.
\]
Let $ \K = \mathbb{F}_5$ and let the $2\times 4 $ matrix
\[
    \bm G_1 = \begin{pmatrix}
        1 & 0 & 1 & 1 \\
        0 & 1 & 1 & 2
        \end{pmatrix}
\]
be the generator matrix of a linear code in $\mathrm{Gr}_5(2,4)$. 
Then, let $\bm Q = \bm D \bm P$ be a monomial matrix, and consider $\bm G_2$ to be the reduced row echelon form of $\bm G_1 \bm Q$
\[
\bm G_2 =
\begin{pmatrix}
1 & 0 & 1 & 2 \\
0 & 1 & 3 & 2
\end{pmatrix}.
\]
The Pl\"ucker coordinates of the code spanned by $\bm G_2$ are
\[
    p_{12} = 1,\, p_{13}=3,\, p_{14}=2,\, p_{23}=4,\, p_{24}=3,\, p_{34}=1,
\]
and hence, $\mu(\bm G_2)=1/3=2$. After writing $\mu = \frac{g}{f}$, with $g=p_{12}p_{34}$ and $f=p_{14}p_{23}$, from \Cref{th:eqs}, if $\bm X=(x_{ij})$ is an $4 \times 4$ matrix of unknowns, we can write the polynomial $h(\bm X) = g(\bm G_1 \bm X) - f(\bm G_1 \bm X)\mu(\bm G_2)$ which vanishes in $\bm P$. First of all, the matrix $\bm G_1 \bm X \in \mathbb{F}_5[x_{1,1},\dots,x_{n,n}]^{2 \times 4}$ is given by
\[
    \begin{pmatrix}
        x_{11} + x_{31} + x_{41} & & x_{12} + x_{32} + x_{42} & & x_{13} + x_{33} + x_{43} & & x_{14} + x_{34} + x_{44} \\
        \\
        x_{21} + x_{31} + 2x_{41} & & x_{22} + x_{32} + 2x_{42} & & x_{23} + x_{33} + 2x_{43} & & x_{24} + x_{34} + 2x_{44}
    \end{pmatrix}.
\]
Hence, $g(\bm G_1 \bm X)$ is given by $p_{12}(\bm G_1 \bm X)p_{34}(\bm G_1 \bm X)$, where
\[
    p_{12}(\bm G_1 \bm X) = \begin{vmatrix}
        x_{11} + x_{31} + x_{41} & & x_{12} + x_{32} + x_{42} \\
        \\
        x_{21} + x_{31} + 2x_{41} & & x_{22} + x_{32} + 2x_{42}
    \end{vmatrix},
\]
and
\[
    p_{34}(\bm G_1 \bm X) = \begin{vmatrix}
        x_{13} + x_{33} + x_{43} & & x_{14} + x_{34} + x_{44} \\
        \\
        x_{23} + x_{33} + 2x_{43} & & x_{24} + x_{34} + 2x_{44}
    \end{vmatrix}.
\]
Analogously, $f(\bm G_1 \bm X)$ is $p_{14}(\bm G_1 \bm X)p_{23}(\bm G_1 \bm X)$, with
\[
    p_{14}(\bm G_1 \bm X) = \begin{vmatrix}
        x_{11} + x_{31} + x_{41} & & x_{14} + x_{34} + x_{44} \\
        \\
        x_{21} + x_{31} + 2x_{41} & & x_{24} + x_{34} + 2x_{44}
    \end{vmatrix},
\]
while
\[
    p_{23}(\bm G_1 \bm X) = \begin{vmatrix}
        x_{12} + x_{32} + x_{42} & & x_{13} + x_{33} + x_{43}  \\
        \\
        x_{22} + x_{32} + 2x_{42} & & x_{23} + x_{33} + 2x_{43} 
    \end{vmatrix}.
\]

Observe that the polynomial $h(\bm X)$ has degree $4$ because both $g(\bm G_1 \bm X)$ and $f(\bm G_1 \bm X)$ are polynomials of degree $2k=4$ in the $x_{ij}$. One can also add the constraints
\[
    \begin{cases}
        \sum_{j=1}^n x_{ij} = 1 & \text{for }i=1,\dots,n; \\
        \sum_{i=1}^n x_{ij} = 1 & \text{for }j=1,\dots,n; \\
        x_{ij}^2 - x_{ij} = 0 & \text{for }i,j=1,\dots,n
    \end{cases}
\]
defining permutation matrices, as showed in \cite{saeed2017algebraic}. In this way, we see that the permutation matrix
\[
    \bm P =
    \begin{pmatrix}
    0 & 0 & 1 & 0 \\
    1 & 0 & 0 & 0 \\
    0 & 0 & 0 & 1 \\
    0 & 1 & 0 & 0
    \end{pmatrix}
\]
is a solution of $h(\bm X)$.

To add one more equation, since $\mu$ is also defined on $\bm G_1$, 
we can consider
\[
    h'(\bm X) = g(\bm G_2 \bm X^T) - f(\bm G_2 \bm X^T)\mu (\bm G_1),
\]
modeling the fact that $\bm P^{-1}= \bm P^T$ and that $\bm P^{-1}$ sends the equivalence class of $\bm G_2$ into the one of $\bm G_1$.
 Finally, to retrieve the matrix $\bm D$, one can use the canonical form from \cite{d2026group,cryptoeprint:2025/397}, which is
\[
    \bm D =
    \begin{pmatrix}
    1 & 0 & 0 & 0 \\
    0 & 3 & 0 & 0 \\
    0 & 0 & 4 & 0 \\
    0 & 0 & 0 & 2
    \end{pmatrix}.
\]

\begin{credits}
    \subsubsection{\ackname}
    The authors are members of the GNSAGA-INdAM  and CrypTO, the group of Cryptography and Number Theory of the Politecnico di Torino. This work was partially supported by the Italian Ministry of University and Research in the framework of the Call for Proposals for scrolling of final rankings of the PRIN 2022 call - Protocol no. 2022RFAZCJ. The authors would like to thank Michele Graffeo for its valuable comments, which helped to improve the overall quality of this work.
\end{credits}

\bibliographystyle{splncs04}
\bibliography{cryptobib/abbrev3,cryptobib/crypto, bibliography}

\end{document}